\documentclass[11pt,letterpaper]{article}

\usepackage[letterpaper,left=1.25in,right=1.25in,top=1.2in,bottom=1.2in,headsep=0.3in]{geometry}
\usepackage{iftex}
\usepackage{fontspec}
\usepackage{amsmath,amssymb,amsthm}
\usepackage{unicode-math}
\usepackage[final,protrusion=true,expansion=false]{microtype}
\usepackage[english]{babel}
\usepackage[autostyle=true]{csquotes}
\usepackage{xcolor}
\definecolor{navy}{RGB}{31,56,100}
\definecolor{rule}{RGB}{120,120,120}
\definecolor{fignavy}{RGB}{31,56,100}
\definecolor{figred}{RGB}{176,58,46}
\usepackage{booktabs}
\usepackage{caption}
\usepackage{enumitem}
\setlist[enumerate]{leftmargin=*,itemsep=0.25em}
\setlist[itemize]{leftmargin=*,itemsep=0.25em}
\usepackage{graphicx}
\usepackage{float}
\usepackage[section]{placeins}
\usepackage{adjustbox}
\usepackage{tikz}
\usetikzlibrary{arrows.meta,calc,decorations.pathreplacing,positioning,angles,quotes,decorations.markings}
\usepackage{listings}
\usepackage{needspace}
\usepackage[most]{tcolorbox}
\usepackage{fancyhdr}
\usepackage{titlesec}
\usepackage{setspace}

\titleformat{\section}{\color{navy}\large\bfseries}{\thesection.}{0.6em}{}
\titleformat{\subsection}{\color{navy}\normalsize\bfseries}{\thesubsection.}{0.6em}{}
\titleformat{\paragraph}[runin]{\normalfont\scshape}{}{0em}{}[]
\titlespacing*{\section}{0pt}{1.6\baselineskip plus 0.4\baselineskip minus 0.2\baselineskip}{0.7\baselineskip}
\titlespacing*{\subsection}{0pt}{1.2\baselineskip plus 0.3\baselineskip}{0.5\baselineskip}
\titlespacing*{\paragraph}{0pt}{0.8\baselineskip}{0.6em}

\newtheoremstyle{scplain}{0.9\baselineskip plus 0.2\baselineskip}{0.9\baselineskip plus 0.2\baselineskip}{\itshape}{}{\scshape}{.}{0.6em}{\thmname{#1}\thmnumber{ #2}\thmnote{ (\normalfont\itshape#3)}}
\newtheoremstyle{scremark}{0.9\baselineskip plus 0.2\baselineskip}{0.9\baselineskip plus 0.2\baselineskip}{\normalfont}{}{\scshape}{.}{0.6em}{\thmname{#1}\thmnumber{ #2}\thmnote{ (\normalfont\itshape#3)}}
\theoremstyle{scplain}
\usepackage{aliascnt}
\newtheorem{theorem}{Theorem}
\newaliascnt{lemma}{theorem}\newtheorem{lemma}[lemma]{Lemma}\aliascntresetthe{lemma}
\newaliascnt{proposition}{theorem}\newtheorem{proposition}[proposition]{Proposition}\aliascntresetthe{proposition}
\newtheorem{corollary}{Corollary}[theorem]
\theoremstyle{scremark}
\newaliascnt{remark}{theorem}\newtheorem{remark}[remark]{Remark}\aliascntresetthe{remark}
\makeatletter
\renewenvironment{proof}[1][\proofname]{\par\pushQED{\qed}\normalfont\topsep6\p@\@plus6\p@\relax\trivlist\item[\hskip\labelsep\scshape#1\@addpunct{.}]\ignorespaces}{\popQED\endtrivlist\@endpefalse}
\makeatother

\newtcolorbox{practicebox}[1]{enhanced,breakable,colback=white,colframe=navy,boxrule=0.5pt,arc=0pt,
  left=8pt,right=8pt,top=6pt,bottom=6pt,fonttitle=\scshape,coltitle=navy,colbacktitle=white,
  title={#1},attach boxed title to top left={yshift=-2.5mm,xshift=6mm},
  boxed title style={boxrule=0pt,colback=white,arc=0pt,left=3pt,right=3pt}}

\usepackage[style=authoryear-comp,backend=biber,maxcitenames=4,maxbibnames=10,giveninits=true,
  uniquename=false,uniquelist=false,useprefix=true,doi=true,isbn=false,url=false,eprint=false,dashed=false,natbib=false]{biblatex}
\DeclareFieldFormat{doi}{\href{https://doi.org/#1}{doi:\nolinkurl{#1}}}
\AtEveryBibitem{\clearfield{note}}
\renewbibmacro{in:}{}
\DeclareNameAlias{sortname}{family-given}
\DeclareFieldFormat[article]{volume}{#1}
\DeclareFieldFormat[article]{number}{\mkbibparens{#1}}
\renewbibmacro*{volume+number+eid}{\printfield{volume}\printfield{number}\setunit{\addcomma\space}\printfield{eid}}
\DeclareFieldFormat[article]{pages}{#1}
\DeclareFieldFormat[book]{series}{#1}

\usepackage[colorlinks=true,linkcolor=navy,citecolor=navy,urlcolor=navy,bookmarksnumbered=true,
  pdfstartview=FitH,breaklinks=true]{hyperref}
\hypersetup{
  pdftitle={Signal Correlation, IC, and PnL Dependence},
  pdfauthor={Marc Nunes},
  pdfsubject={A partial-correlation decomposition with residual-gauge invariants and ensemble bounds under Euclidean and risk metrics},
  pdfkeywords={information coefficient; signal correlation; PnL correlation; partial correlation; residual gauge; Gram matrix; multiple correlation; alpha combination; risk metric; quantitative equity},
  pdfcreator={LuaLaTeX}}
\usepackage[capitalise,nameinlink,noabbrev]{cleveref}
\crefname{lemma}{Lemma}{Lemmas}
\crefname{theorem}{Theorem}{Theorems}
\crefname{proposition}{Proposition}{Propositions}
\crefname{corollary}{Corollary}{Corollaries}
\crefname{remark}{Remark}{Remarks}
\crefname{section}{Section}{Sections}
\crefname{appendix}{Appendix}{Appendices}
\crefname{figure}{Figure}{Figures}
\crefname{table}{Table}{Tables}

\newcommand{\papertitle}{Signal Correlation, IC, and PnL Dependence}
\newcommand{\papersubtitle}{A partial-correlation decomposition with residual-gauge invariants and ensemble bounds under Euclidean and risk metrics}
\newcommand{\paperabstract}{Signal correlation and PnL correlation are correlations over different index sets—across assets at each date versus across dates for scalar payoffs—and practitioners often treat the first as a proxy for the second. We give an exact decomposition that shows what that proxy sees and what it discards. We recall that at each date a normalized signal's projection onto the realized demeaned return direction is its realized cross-sectional Pearson IC, so that fixed-signal PnL is return dispersion times IC \parencite{qian2004active}, and we identify the normalized similarity of two signals in the orthogonal complement as their partial correlation controlling for realized returns; the residual rotational freedom is an orthogonal gauge whose invariants are the transverse Gram matrix. Signal correlation therefore equals an uncentered IC cross-moment plus transverse similarity, while Pearson PnL correlation centers and dispersion-weights the IC series alone. Our main result is a non-identifiability theorem: absent constraints on transverse geometry, neither correlation bounds or orders the other, which sharpens the simulation finding of \textcite{sorensen2004multiple} into an exact statement. For normalized ensembles the transverse Gram matrix re-enters through the normalization denominator; the ex post optimal combination under Euclidean and general covariance-risk metrics is the classical multiple-correlation bound, and we show that the two rules agree for every longitudinal exposure if and only if the risk metric is isotropic on the signal span. The classical exact null law of partial correlation under isotropy, a synthetic mechanism illustration at weak-IC scales, and inference guidance under temporal dependence complete the treatment.}

\fancypagestyle{plain}{\fancyhf{}\fancyfoot[C]{\small\thepage}}

\begin{document}
\thispagestyle{plain}
\begin{center}
  {\color{navy}\LARGE\bfseries \papertitle\par}
  \vspace{0.6em}
  {\large\itshape \papersubtitle\par}
  \vspace{1.4em}
  {\normalsize Marc Nunes\par}
  \vspace{0.2em}
  {\small AlphaNova\quad\textperiodcentered\quad \href{mailto:marc.nunes@alphanova.tech}{marc.nunes@alphanova.tech}\par}
  \vspace{0.8em}
  {\small\scshape Research Note\quad\textperiodcentered\quad Version 8\quad\textperiodcentered\quad September 2026\par}
\end{center}
\vspace{1.2em}
\begin{center}
\begin{minipage}{0.9\textwidth}
\small
{\scshape Abstract.}\; \paperabstract
\end{minipage}
\end{center}
\vspace{1.0em}

\Needspace*{8\baselineskip}
\section{Setup: two correlations over different index sets}\label{sec:1}

Take $d\ge2$ assets observed on dates $t=1,\ldots,T$. Write $R_t\in\mathbb R^d$ for forward realized returns and $S_{i,t}\in\mathbb R^d$ for a cross-sectional signal. To begin, assume

\[
\mathbf1^\top S_{i,t}=0,
\qquad
\lVert S_{i,t}\rVert_2=1.
\]

Define the neutral subspace

\[
H=\{x\in\mathbb R^d:\mathbf1^\top x=0\},
\qquad \dim H=d-1,
\]

and split returns as

\[
R_t=\bar R_t\mathbf1+\widetilde R_t.
\]

Whenever $\widetilde R_t\ne0$, set

\[
a_t=\lVert\widetilde R_t\rVert_2,
\qquad
Q_t=\frac{\widetilde R_t}{a_t}\in S^{d-2}.
\]

Here $S^{d-2}:=\{x\in H:\lVert x\rVert_2=1\}$ is the unit sphere of $H$; since $\dim H=d-1$, it is a $(d-2)$-sphere in the usual convention that $S^n$ sits in an $(n+1)$-dimensional space. Thus $a_t$ is the realized cross-sectional return magnitude and $Q_t$ its direction. Dates with $a_t=0$ give zero cross-sectional PnL for every neutral signal and can be dropped.

Signal similarity is computed across assets at each date and then averaged over time. PnL correlation is computed across time, after each signal vector has been collapsed to a scalar payoff. These are not two estimators of one population correlation. They are inner products over different index sets.

\subsection{Motivation and related work}\label{sec:1.1}

Quantitative equity research runs on two correlation matrices that are often used interchangeably. The first is the \emph{signal} correlation matrix: for a library of candidate alphas, the average cross-sectional correlation between each pair of score vectors. It is the standard tool for judging whether a new signal is \enquote{different enough} to be worth adding, and for diagnosing crowding within a feature pipeline. The second is the \emph{PnL} correlation matrix: the time-series correlation of the realized payoffs that each signal generates. It is what an allocator, a risk manager, or a multi-strategy platform actually uses to size positions and to decide whether two books diversify each other. The literature does distinguish the two. \textcite{dang2019alphacorr} present alpha PnL correlation and alpha value correlation as alternative techniques for the same task, with the explicit requirement that a good correlation measure be able to predict the co-movement of two alphas' PnL. What is not stated is the condition under which one may stand in for the other. Practitioners know from experience that the two matrices disagree—low signal correlation does not guarantee low PnL correlation, and high signal correlation is sometimes harmless—but the disagreement is usually treated as noise, or as an artefact of sample size, rather than as something with an exact structure.

The framework that connects them is old. \textcite{grinold1989fundamental,grinold1994alpha} and \textcite{grinold2000active} express expected active return as a product of volatility, information coefficient, and score; the information coefficient is a cross-sectional correlation between forecast and realization, and \cref{lem:1} below is simply the realized, single-date version of that statement for a Euclidean-normalized portfolio. \textcite{clarke2002constraints} introduce the transfer coefficient to account for the gap between the research signal and the implemented portfolio; the scope condition following \cref{lem:1} is the point at which that gap enters the present decomposition. \textcite{qian2004active} already derive the single-period relationship between excess return, IC, and cross-sectional return dispersion, and analyze the interaction between IC and dispersion over time; \cref{lem:1} and the dispersion-regime term of \cref{sec:5} are the realized, Euclidean-normalized forms of those two observations. \textcite{qian2007quantitative} treat IC as a time series whose variability, not only its mean, drives active risk, and combine factors through their IC covariance; \cref{sec:5} and \cref{rem:5} make the same time-series distinction in the present notation. \textcite{ding2017redux} give a modern treatment of random, time-varying IC and the fundamental law. The alpha-combination literature \parencite{kakushadze2017billion} treats optimal combination weights through an inverse alpha-return covariance; \cref{sec:6} gives the single-date cross-sectional analogue in both the Euclidean and the risk metric. Distribution theory for the sample cross-sectional information coefficient, including its time-varying and risk-adjusted forms, is developed in \textcite{ding2023timevarying}; weighting schemes for combining correlated signals are treated in \textcite{grinold2010signal}. \textcite{sorensen2004multiple} study exactly the two quantities compared here, varying the cross-sectional correlation of alpha signals and the time-series correlation of their information coefficients in simulation, and report that the latter matters more for diversification of strategy risk. That finding is the empirical phenomenon this paper explains: the decomposition below identifies which component each measure sees, and therefore why the two behave differently. The present paper begins from the Qian–Hua economic identity and decomposes the relationship between signal correlation and PnL dependence into longitudinal IC co-alignment, transverse partial correlation, dispersion weighting, and temporal centering, in an invariant and multi-signal formulation.

The statistical objects are equally classical. The normalized transverse statistic of \cref{sec:3} is a cross-sectional partial correlation with a single control, whose exact null distribution goes back to \textcite{fisher1924partial} and is treated in \textcite{anderson2003multivariate}; \cref{sec:8} does not derive a new law but identifies which classical one applies and with which degrees of freedom. Partial correlation is also established in finance as a tool for conditioning asset relationships on other assets \parencite{kenett2015partial}; the use here is different, conditioning two signals on the realized return vector rather than relating assets to one another. The classification of configurations up to orthogonal transformation by their Gram matrix is closely related to Weyl's first fundamental theorem for the orthogonal group \parencite{weyl1939classical}, which states that polynomial orthogonal invariants are generated by inner products; \cref{prop:4} gives the elementary orbit statement directly in the transverse subspace, so that every frame-independent diagnostic factors through $(p,B)$. The isotropic reference distribution on the sphere in \cref{sec:8} is standard directional statistics \parencite{mardia2000directional}. Dependence-aware inference for time-averaged moments—HAC covariance estimation \parencite{newey1987simple} and block or stationary bootstraps \parencite{kunsch1989jackknife,politis1994stationary,lahiri2003resampling}—is the appropriate machinery for \cref{sec:11}, given signal persistence and overlapping forward horizons. Multiplicity across the $K(K-1)/2$ pairs of a library sits within the broader literature on data snooping and multiple testing in alpha discovery \parencite{white2000reality,romano2005stepwise,harvey2016cross,bailey2014deflated}, and \textcite{benjamini1995fdr} supplies the false-discovery-rate control named in \cref{sec:11}.

What the present paper adds is not a new identity but a bookkeeping discipline that these strands have not been combined into. Signal correlation, realized IC, partial correlation given returns, transfer efficiency, PnL correlation, multiple correlation, and risk-adjusted combination weights are shown to be coordinates and transformations of one date-by-date orthogonal decomposition. That decomposition says exactly which component each diagnostic sees, which it discards, and where the discarded component re-enters when signals are combined into a normalized or risk-constrained book. The decomposition also makes it impossible to confuse the Euclidean geometry in which Pearson IC lives with the covariance geometry in which economic risk lives; the two rules agree for every longitudinal exposure only in the isotropic case identified in \cref{sec:6}, and the paper keeps them as separate lenses.

Concretely, the paper contributes four things. First, \cref{thm:9}, an exact non-identifiability result: without constraints on transverse geometry, neither signal correlation nor PnL correlation bounds or orders the other. This sharpens the simulation finding of \textcite{sorensen2004multiple} into a statement about what is, and is not, determined. Second, the four-line hierarchy of \cref{sec:12} as a single organizing identity, together with the three-way attribution of \cref{sec:5}, which separates dispersion weighting, Pearson centering, and time-series normalization and isolates the pairwise dispersion-regime term. Third, the observation that the transverse Gram matrix, inert for a fixed signal, re-enters through the normalization denominator once signals are combined. Fourth, the characterization that the Euclidean and risk optima agree for every longitudinal exposure if and only if the risk metric is isotropic on the signal span.

\Needspace*{5\baselineskip}
\begin{lemma}[the longitudinal coordinate is realized Pearson IC]\label{lem:1}
For neutral, Euclidean-unit signals,

\[
\rho_{12,t}^{\mathrm{sig}}
:=\operatorname{Corr}_{\mathrm{cs}}(S_{1,t},S_{2,t})
=\langle S_{1,t},S_{2,t}\rangle,
\]

and

\[
\boxed{
p_{i,t}:=\langle S_{i,t},Q_t\rangle
=\operatorname{Corr}_{\mathrm{cs}}(S_{i,t},R_t)
=:\rho_{iR,t}.}
\]

So $p_{i,t}$ is exactly the realized cross-sectional Pearson IC. No $d$-versus-$(d-1)$ caveat is needed: the covariance and variance convention cancels in a Pearson correlation.

Trading the signal vector itself gives

\[
\boxed{
\Pi_{i,t}=\langle S_{i,t},R_t\rangle=a_tp_{i,t}.}
\]

\end{lemma}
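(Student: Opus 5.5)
The argument is a direct computation from the definition of the cross-sectional Pearson correlation. For vectors $x,y\in\mathbb R^d$, write $\operatorname{Corr}_{\mathrm{cs}}(x,y)=\langle x-\bar x\mathbf1,\,y-\bar y\mathbf1\rangle/(\lVert x-\bar x\mathbf1\rVert_2\,\lVert y-\bar y\mathbf1\rVert_2)$, where $\bar x=\mathbf1^\top x/d$. Under either the $1/d$ or the $1/(d-1)$ convention for covariance and variance, the same scalar appears in the numerator and, via the geometric mean of the two variances, in the denominator. It therefore cancels, and only the centered Euclidean inner product and norms remain. This is the source of the statement's remark that no $d$-versus-$(d-1)$ caveat is needed.

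\textbf{Step 1 (signal correlation).} The standing assumption $\mathbf1^\top S_{i,t}=0$ gives $\bar S_{i,t}=0$, so centering does nothing. The assumption $\lVert S_{i,t}\rVert_2=1$ makes the denominator equal to $1$. Hence $\operatorname{Corr}_{\mathrm{cs}}(S_{1,t},S_{2,t})=\langle S_{1,t},S_{2,t}\rangle$.

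\textbf{Step 2 (IC).} Centering $R_t$ removes $\bar R_t\mathbf1$, since $\mathbf1^\top R_t/d=\bar R_t$ and $\mathbf1^\top\widetilde R_t=0$. The centered return vector is therefore $\widetilde R_t=a_tQ_t$, and its norm is $a_t>0$ on the retained dates. Substituting gives
\[
\operatorname{Corr}_{\mathrm{cs}}(S_{i,t},R_t)=\frac{\langle S_{i,t},a_tQ_t\rangle}{1\cdot a_t}=\langle S_{i,t},Q_t\rangle=p_{i,t}.
\]

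\textbf{Step 3 (PnL).} We have $\Pi_{i,t}=\langle S_{i,t},\bar R_t\mathbf1+\widetilde R_t\rangle=\bar R_t\,\mathbf1^\top S_{i,t}+a_t\langle S_{i,t},Q_t\rangle=a_tp_{i,t}$, using neutrality to kill the market term. On dropped dates with $a_t=0$, both $\Pi_{i,t}=0$ and $a_tp_{i,t}=0$ under any convention for $p_{i,t}$, so the identity persists.

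The only point needing care is well-definedness. The correlations require nonzero centered norms: this holds for $S_{i,t}$ by unit norm, and for $R_t$ because we restrict to dates with $a_t\neq0$, which is also what makes $Q_t$ defined. Beyond that there is no real obstacle; the lemma is a bookkeeping identity.
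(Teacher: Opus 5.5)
Your proposal is correct and follows the paper's route. The paper gives no separate proof of this lemma and treats it as a direct consequence of neutrality, unit norm, and the split $R_t=\bar R_t\mathbf 1+\widetilde R_t$, with the normalization convention cancelling in the Pearson ratio exactly as you argue. Your restriction to dates with $a_t\neq0$ for well-definedness also matches the paper's convention of dropping those dates.
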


Realized PnL is realized cross-sectional return magnitude times realized Pearson IC. This is the realized, Euclidean-normalized form of the single-period identity in \textcite{qian2004active}.

\paragraph{Important scope condition.}
The identity needs the same Euclidean-normalized vector to serve as both the traded portfolio and the argument of the IC. When the implemented portfolio $w_t$ departs from the research signal—through constraints, an optimizer, cost penalties, or a different normalization—set $\widehat w_t=w_t/\lVert w_t\rVert_2$. Then

\[
\Pi_t
=a_t\lVert w_t\rVert_2
\operatorname{Corr}_{\mathrm{cs}}(\widehat w_t,R_t).
\]

The alignment $\langle\widehat w_t,S_t\rangle$ between implemented portfolio and signal is close in spirit to the transfer coefficient of \textcite{clarke2002constraints}. Under unit-gross or unit-ex-ante-volatility scaling, $\lVert w_t\rVert_2$ varies from date to date and becomes a third multiplicative channel.

Rank IC does not satisfy $\Pi=a\times\mathrm{rankIC}$ unless the traded weights are themselves the normalized rank-transformed vector. A diagnostic should never quietly swap one signal representation for another.

\Needspace*{8\baselineskip}
\section{Moving frames and an elementary projection}\label{sec:2}

Write $O(H)$ for the orthogonal group of $H$: the linear maps $G\colon H\to H$ with $\langle Gx,Gy\rangle=\langle x,y\rangle$ for all $x,y\in H$. Since $\dim H=d-1$, $O(H)\cong O(d-1)$. Fix once and for all a reference unit vector $e_1\in S^{d-2}$. Any choice will do, for instance $e_1=(1,-1,0,\ldots,0)/\sqrt2$; the ambient basis vector $(1,0,\ldots,0)$ is not admissible because it does not lie in $H$. Nothing below depends on the choice.

A \emph{moving frame} is a choice, at each date, of an orthonormal basis of $H$ whose first element is the realized return direction $Q_t$. Equivalently, it is a choice of $G_t\in O(H)$ with

\[
G_tQ_t=e_1,
\]

the rotation (or reflection) that carries the date-$t$ frame onto the fixed reference basis. Such a $G_t$ exists for every $Q_t$ because $O(H)$ acts transitively on $S^{d-2}$, and it is unique only up to the stabilizer of $e_1$ in $O(H)$, namely $O(e_1^\perp)\cong O(d-2)$, where $e_1^\perp$ denotes the orthogonal complement of $e_1$ within $H$. That leftover freedom is what we call the \emph{residual gauge}; the word is used only in this weak sense of a redundant frame choice with a residual symmetry group, and no connection or curvature is introduced. In the frame, write

\[
P_{i,t}=G_tS_{i,t}=p_{i,t}e_1+U_{i,t},
\qquad U_{i,t}\in e_1^\perp.
\]

None of the diagnostics needs $G_t$ itself. They need only

\[
\langle S_{1,t},S_{2,t}\rangle,
\qquad
\langle S_{i,t},Q_t\rangle,
\qquad
a_t.
\]

\Needspace*{5\baselineskip}
\begin{lemma}[longitudinal–transverse decomposition]\label{lem:2}
For every date,

\[
\boxed{
\langle S_{1,t},S_{2,t}\rangle
=p_{1,t}p_{2,t}+\langle U_{1,t},U_{2,t}\rangle,}
\]

with

\[
\lVert U_{i,t}\rVert_2^2=1-p_{i,t}^2.
\]

\end{lemma}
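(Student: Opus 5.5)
The plan is to move everything into the moving frame of \cref{sec:2} and compute there, using only that $G_t$ is an isometry of $H$. The signals $S_{1,t},S_{2,t}$ and the direction $Q_t$ all lie in $H$, so $G_t$ acts on them and preserves their inner products:
\[
\langle S_{1,t},S_{2,t}\rangle=\langle P_{1,t},P_{2,t}\rangle,
\qquad
\langle P_{i,t},e_1\rangle=\langle G_tS_{i,t},G_tQ_t\rangle=\langle S_{i,t},Q_t\rangle=p_{i,t}.
\]
The second identity shows that the coefficient of $e_1$ in $P_{i,t}=p_{i,t}e_1+U_{i,t}$ is indeed the realized IC of \cref{lem:1}, so the decomposition in the frame is well defined.

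Next I check that $U_{i,t}\in e_1^\perp$. The definition $U_{i,t}:=P_{i,t}-p_{i,t}e_1$ gives $\langle U_{i,t},e_1\rangle=p_{i,t}-p_{i,t}\lVert e_1\rVert_2^2=0$, because $e_1$ is a unit vector. Expanding $\langle P_{1,t},P_{2,t}\rangle$ bilinearly yields four terms. These are $p_{1,t}p_{2,t}\lVert e_1\rVert_2^2=p_{1,t}p_{2,t}$, two cross terms $p_{1,t}\langle e_1,U_{2,t}\rangle$ and $p_{2,t}\langle U_{1,t},e_1\rangle$, which vanish, and $\langle U_{1,t},U_{2,t}\rangle$. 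Together these give the boxed identity.

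For the norm statement I set $S_{1,t}=S_{2,t}=S_{i,t}$ in the same computation. Then $1=\lVert S_{i,t}\rVert_2^2=\lVert P_{i,t}\rVert_2^2=p_{i,t}^2+\lVert U_{i,t}\rVert_2^2$, which is the Pythagorean split along $e_1$ and its complement.

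The only point needing care is whether the result depends on the choice of $G_t$. The residual gauge replaces $G_t$ by $HG_t$ for some $H$ in the stabilizer $O(e_1^\perp)$. This fixes $p_{i,t}$ and sends $U_{i,t}\mapsto HU_{i,t}$, which leaves $\langle U_{1,t},U_{2,t}\rangle$ unchanged. Alternatively, one notes that $G_t^{-1}U_{i,t}=S_{i,t}-p_{i,t}Q_t$ is the projection of $S_{i,t}$ onto $Q_t^\perp\cap H$, so the statement can be proved frame-free with $U$ replaced by this projection. I expect no real obstacle: the argument is a one-line Pythagorean expansion.
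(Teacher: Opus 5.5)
Your proof is correct and is essentially the paper's argument. The paper records the lemma simply as the orthogonal projection identity, that is, the Pythagorean split of $S_{i,t}$ along $Q_t$ and its complement in the neutral subspace, which is exactly what your frame computation (and your frame-free version with $S_{i,t}-p_{i,t}Q_t$) carries out. The only blemish is cosmetic: you reuse the letter $H$ for the stabilizer element, which clashes with the paper's neutral subspace $H$.
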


This is just the orthogonal projection identity. Its value here is diagnostic, not algebraic.

Define time averaging once and for all by

\[
\overline{x}:=\frac1T\sum_{t=1}^T x_t.
\]

{\predisplaypenalty=10000 \postdisplaypenalty=10000 \interlinepenalty=10000 \relax
Then
\[
\boxed{
C_{\mathrm{sig}}=C_{\mathrm{IC}}+C_\perp,}
\]
where
\[
C_{\mathrm{sig}}=\overline{\langle S_{1,t},S_{2,t}\rangle},
\quad
C_{\mathrm{IC}}=\overline{p_{1,t}p_{2,t}},
\quad
C_\perp=\overline{\langle U_{1,t},U_{2,t}\rangle}.
\]
Note that $C_{\mathrm{IC}}$ is the uncentered cross-moment of the two realized IC series, not their Pearson correlation.
\par}

\Needspace*{8\baselineskip}
\section{The transverse statistic is partial correlation}\label{sec:3}

\begin{figure}[!tbp]
\centering
\adjustbox{max width=\linewidth}{
\begin{tikzpicture}[x=1cm,y=1cm,
  >={Stealth[length=2.2mm,width=1.8mm]},
  font=\footnotesize,
  navy/.style={color=fignavy},
  ret/.style={color=figred},
  vec/.style={line width=0.9pt,->},
  constr/.style={black!55,line width=0.5pt,dash pattern=on 2.2pt off 1.6pt},
  gauge/.style={black!50,line width=0.5pt,dash pattern=on 1.6pt off 1.6pt}]
  \node[anchor=north west,text=black!60,align=left] at (-6.3,5.75)
    {One date $t$; neutral subspace $H$, $\dim H=d-1$. Unit neutral signals $S_i$ in the moving frame:\\ $P_i=G_tS_i=p_ie_1+U_i$, where $G_t$ maps the realized return direction $Q_t=\widetilde R_t/a_t$ to $e_1$.};
  \begin{scope}[shift={(0.5,0)}]
  \fill[fignavy,opacity=0.06] (2.45,4.34) -- (-2.45,1.25) -- (-2.45,-4.34) -- (2.45,-1.25) -- cycle;
  \draw[fignavy!60,line width=0.5pt] (2.45,4.34) -- (-2.45,1.25) -- (-2.45,-4.34) -- (2.45,-1.25) -- cycle;
  \node[anchor=south east,navy] at (2.45,4.45) {$e_1^{\perp}$: transverse complement, $\dim=m=d-2$};
  \draw[figred!40,line width=0.6pt] (-3.65,0) -- (0,0);
  \draw[figred,line width=0.8pt,->] (0,0) -- (6.25,0);
  \node[anchor=north east,ret,align=right] at (6.25,-0.14) {$Q_t=e_1$\\ \textcolor{black!60}{realized return direction}};
  \begin{scope}[rotate around={-15:(-0.62,-1.23)}]
    \draw[gauge,->] (-0.62,-1.23) ++(205:0.45 and 0.6) arc (205:-115:0.45 and 0.6);
  \end{scope}
  \draw[black!50,line width=0.4pt] (-0.39,-1.85) -- (1.05,-2.95);
  \node[anchor=north west,text=black!60,align=left] at (1.1,-2.85)
    {residual gauge $O(m)$, $m=d-2$:\\ rotating the transverse complement\\ leaves only the Gram matrix\\ $\langle U_i,U_j\rangle$ invariant};
  \draw[constr] (1.38,2.47) -- (2.01,0);
  \draw[constr] (-0.63,2.47) -- (1.38,2.47);
  \draw[constr] (2.60,3.46) -- (1.28,0);
  \draw[constr] (1.32,3.46) -- (2.60,3.46);
  \draw[fignavy,line width=0.6pt] (69.1:1.05) arc (69.1:104.3:1.05);
  \node[navy] at (87:1.32) {$\theta$};
  \draw[vec,fignavy] (0,0) -- (-0.63,2.47);
  \draw[vec,fignavy] (0,0) -- (1.32,3.46);
  \node[anchor=east,navy] at (-0.72,2.55) {$U_1$};
  \node[anchor=south east,navy] at (1.30,3.52) {$U_2$};
  \draw[vec,black] (0,0) -- (1.38,2.47);
  \draw[vec,black] (0,0) -- (2.60,3.46);
  \node[anchor=west] at (1.45,2.55) {$P_1$};
  \node[anchor=west] at (2.68,3.52) {$P_2$};
  \draw[figred,line width=0.7pt] (2.01,-0.11) -- (2.01,0.11);
  \draw[figred,line width=0.7pt] (1.28,-0.11) -- (1.28,0.11);
  \node[anchor=north,ret] at (2.01,-0.14) {$p_1$};
  \node[anchor=north,ret] at (1.28,-0.14) {$p_2$};
  \fill (0,0) circle (1.1pt);
  \node[anchor=north east,text=black!60] at (-0.05,-0.08) {$0$};
  \end{scope}
  \node[anchor=north west,align=left] at (-6.3,4.35)
    {$\langle P_1,P_2\rangle=\langle S_1,S_2\rangle=p_1p_2+\langle U_1,U_2\rangle$\\[2pt]
     $p_i=\langle S_i,Q_t\rangle=$ realized IC of $S_i$,\quad $\lVert U_i\rVert_2^2=1-p_i^2$\\[2pt]
     $Z_t=\dfrac{\langle U_1,U_2\rangle}{\sqrt{(1-p_1^2)(1-p_2^2)}}=\cos\theta$\\[1pt]
     \textcolor{black!60}{$=$ partial correlation of $S_1$ and $S_2$ given $R_t$}};
\end{tikzpicture}}
\caption{\textbf{Longitudinal–transverse decomposition at one date.} Drawn in the moving frame of \cref{sec:2}, in which $G_t$ sends the realized-return direction $Q_t$ to $e_1$ and each unit neutral signal becomes $P_{i,t}=G_tS_{i,t}=p_{i,t}e_1+U_{i,t}$; because $G_t$ is an isometry, every inner product shown is that of the original signals. The longitudinal coordinate $p_{i,t}=\langle S_{i,t},Q_t\rangle$ is the realized Pearson IC (\cref{lem:1}); $U_{i,t}\in e_1^\perp$ is the transverse component, with $\lVert U_{i,t}\rVert_2^2=1-p_{i,t}^2$ and $\langle S_{1,t},S_{2,t}\rangle=p_{1,t}p_{2,t}+\langle U_{1,t},U_{2,t}\rangle$ (\cref{lem:2}). The angle $\theta$ is between the transverse components, and $Z_t=\cos\theta$ is the partial correlation of the two signals given $R_t$ (\cref{prop:3}). The transverse complement has dimension $m=d-2$; the figure shows the two-dimensional slice spanned by $U_{1,t}$ and $U_{2,t}$. Rotating that complement by $O(m)$ leaves only the transverse Gram matrix invariant (\cref{prop:4}).}
\label{fig:1}
\end{figure}

Normalize the transverse inner product by its attainable radius:

\[
Z_t
:=
\frac{\langle U_{1,t},U_{2,t}\rangle}
{\sqrt{(1-p_{1,t}^2)(1-p_{2,t}^2)}}.
\]

\Needspace*{10\baselineskip}
\begin{proposition}[transverse similarity equals partial correlation]\label{prop:3}
Whenever $|p_{1,t}|,|p_{2,t}|<1$,

\[
\boxed{
Z_t
=
\frac{
\rho_{12,t}^{\mathrm{sig}}-\rho_{1R,t}\rho_{2R,t}
}{
\sqrt{(1-\rho_{1R,t}^2)(1-\rho_{2R,t}^2)}
}
=
\rho_{12\cdot R,t}.}
\]

Normalized transverse similarity is therefore exactly the cross-sectional Pearson partial correlation between the two signals, controlling for the realized-return vector.

\end{proposition}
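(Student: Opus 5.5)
The plan is to substitute the two earlier identities into the definition of $Z_t$ and then recognize the result as the textbook partial-correlation formula. First, \cref{lem:2} gives
\[
\langle U_{1,t},U_{2,t}\rangle=\langle S_{1,t},S_{2,t}\rangle-p_{1,t}p_{2,t}
\qquad\text{and}\qquad
\lVert U_{i,t}\rVert_2^2=1-p_{i,t}^2 .
\]
Second, \cref{lem:1} identifies $\langle S_{1,t},S_{2,t}\rangle=\rho^{\mathrm{sig}}_{12,t}$ and $p_{i,t}=\rho_{iR,t}$.

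Substituting both into the numerator and denominator of $Z_t$ yields the first equality in the box directly. The hypothesis $|p_{i,t}|<1$ is exactly what makes the denominator nonzero, so $Z_t$ is well defined. Equivalently, it guarantees $U_{i,t}\neq0$, so $Z_t$ is the cosine of the angle between the two transverse components.

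For the second equality I will not simply quote the recursion formula for first-order partial correlation. That would be circular in spirit. Instead I will use the definition: the partial correlation of $S_1,S_2$ given $R$ is the Pearson correlation of the residuals of $S_i$ after regressing on $R$ with an intercept, across the $d$ assets.

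I will then check two facts.
\begin{itemize}
\item Regressing on $(\mathbf1,R_t)$ spans the same column space as regressing on $(\mathbf1,\widetilde R_t)=(\mathbf1,a_tQ_t)$. Since $S_{i,t}\in H$ is already orthogonal to $\mathbf1$, the residual is $S_{i,t}-\langle S_{i,t},Q_t\rangle Q_t=S_{i,t}-p_{i,t}Q_t$.
\item This residual is mean-zero, because it lies in $H$. Its image under the isometry $G_t$ is exactly $U_{i,t}$.
\end{itemize}

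Since the residuals are centered, their Pearson correlation is the cosine of the angle between them; the $d$ versus $d-1$ normalization cancels, as in \cref{lem:1}. Because $G_t$ preserves inner products, this cosine equals $\langle U_{1,t},U_{2,t}\rangle/(\lVert U_{1,t}\rVert\lVert U_{2,t}\rVert)=Z_t$. That gives $Z_t=\rho_{12\cdot R,t}$ by definition, and it simultaneously re-derives the classical formula from the first equality.

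The only delicate step is the regression step. I need to make sure that partial correlation "controlling for $R_t$" includes an intercept, so that the controlled subspace is $\operatorname{span}(\mathbf1,\widetilde R_t)$. I also need $a_t\neq0$, which holds because zero-dispersion dates were dropped in \cref{sec:1}. Given those two points, everything else is the orthogonal projection identity.
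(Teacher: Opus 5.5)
Your proposal is correct and follows the paper's own argument. You substitute \cref{lem:1} and \cref{lem:2} for the first equality, then identify the cross-sectional regression residuals as $S_{i,t}-p_{i,t}Q_t=G_t^{-1}U_{i,t}$ and use the orthogonality of $G_t$ to equate their normalized inner product with $Z_t$; your explicit check that the intercept is absorbed, because $S_{i,t}\in H$ and $\operatorname{span}(\mathbf1,R_t)=\operatorname{span}(\mathbf1,Q_t)$, is a useful detail the paper leaves implicit.
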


\Needspace*{4\baselineskip}
\begin{proof}
Substitute \cref{lem:1} and \cref{lem:2} into the standard one-control partial-correlation identity. Equivalently, regress each normalized signal cross-sectionally on $Q_t$. In the original coordinates the residuals are $G_t^{-1}U_{1,t}$ and $G_t^{-1}U_{2,t}$; because $G_t$ is orthogonal, their normalized inner product equals that of $U_{1,t}$ and $U_{2,t}$.
\end{proof}

Both coordinates now carry familiar statistical meanings:

\[
\boxed{\begin{aligned}
\text{longitudinal coordinate}&=\text{realized IC},\\
\text{normalized transverse similarity}&=\text{partial correlation given returns}.
\end{aligned}}
\]

\Needspace*{8\baselineskip}
\section{Residual-gauge classification}\label{sec:4}

Once $Q_t$ has been sent to $e_1$, the frame is still arbitrary inside $e_1^\perp$. The residual group is the stabilizer of $e_1$ in $O(H)$, namely $O(e_1^\perp)\cong O(m)$ with $m=d-2$, acting on all transverse vectors simultaneously.

\Needspace*{5\baselineskip}
\begin{proposition}[complete residual-gauge invariants]\label{prop:4}
At a fixed date, let $K$ unit signals have longitudinal vector

\[
p=(p_1,\ldots,p_K)^\top
\]

and transverse vectors $U_1,\ldots,U_K\in\mathbb R^m$. Two ordered configurations lie in the same residual $O(m)$ orbit if and only if they have the same $p$ and the same transverse Gram matrix

\[
B=(\langle U_i,U_j\rangle)_{i,j=1}^K.
\]

Every residual-gauge-invariant statistic therefore factors through $(p,B)$.

\end{proposition}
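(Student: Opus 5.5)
The plan is to prove the two directions of the orbit statement separately and then deduce the factorization as a formal corollary.

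\emph{Necessity.} The residual group $O(e_1^\perp)\cong O(m)$ fixes $e_1$. It therefore leaves each longitudinal coordinate $p_i=\langle P_i,e_1\rangle$ unchanged, since $\langle GP_i,e_1\rangle=\langle P_i,G^{-1}e_1\rangle=\langle P_i,e_1\rangle$. It acts on the transverse parts by $U_i\mapsto GU_i$, so by the definition of an isometry $\langle GU_i,GU_j\rangle=\langle U_i,U_j\rangle$. Hence $(p,B)$ is constant on orbits.

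\emph{Sufficiency.} This is the substantive step and the one I expect to require care. Suppose two configurations $(p,U_1,\ldots,U_K)$ and $(p,U'_1,\ldots,U'_K)$ share $p$ and $B$. I will construct an isometry in stages.

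First, define $L$ on $V=\operatorname{span}\{U_i\}$ by $L(\sum c_iU_i)=\sum c_iU'_i$. The map is well defined: if $\sum c_iU_i=0$, then
\[
\Bigl\lVert\sum c_iU'_i\Bigr\rVert^2=c^\top Bc=\Bigl\lVert\sum c_iU_i\Bigr\rVert^2=0.
\]
The same computation with $c^\top Bc'$ shows that $L$ preserves inner products. So $L$ is a linear isometry from $V$ onto $V'=\operatorname{span}\{U'_i\}$, and in particular $\dim V=\dim V'=\operatorname{rank}B$.

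Second, extend $L$ to all of $\mathbb R^m$. The complements $V^\perp$ and $V'^\perp$ in $e_1^\perp$ have the same dimension $m-\operatorname{rank}B$. Choose orthonormal bases of the two complements and map one onto the other. The direct sum of this map and $L$ is an element $G\in O(m)$ with $GU_i=U'_i$ for all $i$. Extending $G$ by $Ge_1=e_1$ gives an element of the stabilizer, which carries $P_i=p_ie_1+U_i$ to $P'_i$.

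The only point needing attention is well-definedness when the $U_i$ are linearly dependent, for instance when $K>m$ or when some $|p_i|=1$ so that $U_i=0$. The kernel argument above handles all such cases uniformly, so no case split is needed. This is the elementary case of Weyl's theorem cited in \cref{sec:1.1}, proved directly.

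\emph{Factorization.} Necessity and sufficiency together say that the map $\Phi$ sending a configuration to $(p,B)$ separates orbits exactly. A residual-gauge-invariant statistic $f$ is constant on orbits, so $f=\tilde f\circ\Phi$, where $\tilde f$ is defined on the image of $\Phi$ by choosing any preimage. By sufficiency, $\tilde f$ is independent of that choice.

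The image of $\Phi$ consists of the pairs $(p,B)$ with $B$ positive semidefinite, $\operatorname{rank}B\le m$, and $B_{ii}=1-p_i^2$ by \cref{lem:2}. This description is not needed for the proposition, but it records which pairs $(p,B)$ the factored statistic $\tilde f$ must actually be defined on.
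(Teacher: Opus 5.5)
Your proof is correct and follows the paper's argument essentially step for step. The same $c^\top Bc$ computation shows that $U_i\mapsto U_i'$ is a well-defined isometry of the spans, which is then extended to an element of $O(m)$ by completing orthonormal bases; your explicit necessity check, the formal factorization through $(p,B)$, and the description of the image of $\Phi$ only spell out steps the paper leaves implicit or omits.
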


\Needspace*{4\baselineskip}
\begin{proof}
The forward direction is immediate. Conversely, suppose $(U_i)$ and $(U_i')$ share the Gram matrix $B$. If $\sum_i c_iU_i=0$, then

\[
c^\top Bc
=\left\lVert\sum_i c_iU_i\right\rVert_2^2
=0.
\]

The same Gram matrix gives

\[
\left\lVert\sum_i c_iU_i'\right\rVert_2^2
=c^\top Bc=0.
\]

So every linear dependence among the $U_i$ also holds among the $U_i'$, and the linear rule $U_i\mapsto U_i'$ is well defined. Equal Gram matrices make it an isometry between the two spans. Any isometry between subspaces of a finite-dimensional Euclidean space extends to an element of $O(m)$: extend orthonormal bases of the two spans to orthonormal bases of $\mathbb R^m$ and map one to the other. The two configurations therefore share an orbit.
\end{proof}

We use $O(H)$ rather than $SO(H)$ deliberately. Restricting to orientation-preserving frames can add an orientation invariant when the transverse vectors span the whole transverse space, which needs $K\ge m=d-2$. In the usual regime $K\ll d$ this cannot happen; allowing reflections discards the economically meaningless handedness convention in every case.

\Needspace*{8\baselineskip}
\section{PnL dependence and the centering of profitable signals}\label{sec:5}

Allocators do not size positions on signal correlation. They size on the covariance of realized payoffs, and the object they inspect is the Pearson correlation matrix of PnL series, typically on daily or weekly data and typically after each book has been scaled to a common volatility. Three things happen on the way from a signal to that matrix that do not happen in a signal correlation: each date is weighted by realized cross-sectional dispersion, the common expected-alpha component is centered out, and the result is normalized by time-series volatility rather than by cross-sectional norm. This section writes those three transformations out so that each can be attributed separately.

Define

\[
\mu_i=\overline{a_tp_{i,t}},
\qquad
\sigma_i^2=\overline{(a_tp_{i,t}-\mu_i)^2}.
\]

Then

\[
\boxed{
M_{\mathrm{pnl}}
:=\overline{\Pi_{1,t}\Pi_{2,t}}
=\overline{a_t^2p_{1,t}p_{2,t}},}
\]

and

\[
\boxed{
\rho_{\mathrm{pnl}}
=
\frac{
\overline{a_t^2p_{1,t}p_{2,t}}-\mu_1\mu_2
}{\sigma_1\sigma_2}.}
\]

Writing $x_t=p_{1,t}p_{2,t}$ separates pure scale from genuine dispersion-regime alignment:

\[
\boxed{
M_{\mathrm{pnl}}
=\overline{a^2}\,C_{\mathrm{IC}}
+\operatorname{Cov}_t(a_t^2,x_t).}
\]

The covariance term asks whether the two signals agree longitudinally precisely when realized cross-sectional opportunities are large.

Three distinct transformations are in play:

\begin{enumerate}
\item Signal similarity adds transverse partial association to longitudinal IC co-alignment.
\item The uncentered PnL second moment weights longitudinal IC products by $a_t^2$.
\item Pearson PnL correlation centers the dispersion-weighted IC series and normalizes their time-series volatility.
\end{enumerate}

\Needspace*{5\baselineskip}
\begin{remark}[Pearson centering removes shared expected alpha]\label{rem:5}
Write

\[
p_{i,t}=\alpha_i+\varepsilon_{i,t},
\qquad \overline{\varepsilon_i}=0.
\]

If $a_t=a$ is constant, then

\[
C_{\mathrm{IC}}
=\alpha_1\alpha_2+\overline{\varepsilon_{1,t}\varepsilon_{2,t}},
\]

but

\[
\rho_{\mathrm{pnl}}
=\operatorname{Corr}_t(\varepsilon_{1,t},\varepsilon_{2,t}).
\]

Two profitable signals can therefore share a positive expected IC yet have nearly uncorrelated PnL innovations. Pearson correlation strips out the shared mean-alpha component by design. How much this matters relative to transverse geometry and dispersion weighting is an empirical question.

\end{remark}

\Needspace*{8\baselineskip}
\section{Ensembles under Euclidean and economic risk budgets}\label{sec:6}

In practice no one trades a single signal at Euclidean norm one. Alphas are combined, the combination is passed through a risk model, and the book is scaled to a volatility or gross-exposure budget. Two different questions therefore arise. The first is statistical: how much of the realized return direction can the signal span explain, which is a question about ordinary Pearson IC and is answered in the Euclidean metric. The second is economic: how much payoff a combination delivers per unit of modelled risk, which is answered in the covariance metric of the risk model. Practitioners often speak as if these were the same question. This section treats them as two lenses on the same longitudinal vector and shows precisely when they agree.

For $K$ signals, collect their columns in $S_t\in\mathbb R^{d\times K}$, their Euclidean longitudinal coordinates in $p_t=S_t^\top Q_t$, and their transverse coordinates in $U_t$. The Euclidean signal Gram matrix is

\[
\boxed{
A_t:=S_t^\top S_t=p_tp_t^\top+U_t^\top U_t.}
\]

For ensemble weights $v\in\mathbb R^K$,

\[
\Pi_t(v)=a_tp_t^\top v.
\]

After Euclidean normalization, the realized Pearson IC of the combination is

\[
\boxed{
p_t^{\mathrm{comb}}(v)
=
\frac{p_t^\top v}{\sqrt{v^\top A_tv}}
=
\frac{p_t^\top v}
{\sqrt{v^\top p_tp_t^\top v+v^\top U_t^\top U_tv}}.}
\]

\Needspace*{5\baselineskip}
\begin{proposition}[optimal Euclidean-normalized combination]\label{prop:6}
If $A_t$ is positive definite, then

\[
\boxed{
\max_{v\ne0}\big|p_t^{\mathrm{comb}}(v)\big|^2
=p_t^\top A_t^{-1}p_t,
\qquad
v_t^\star\propto A_t^{-1}p_t.}
\]

\end{proposition}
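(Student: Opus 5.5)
The plan is to treat the squared combination IC as a generalized Rayleigh quotient and to bound it with the Cauchy–Schwarz inequality in the inner product induced by $A_t$. Since $A_t$ is positive definite, $\langle x,y\rangle_{A}:=x^\top A_t y$ is an inner product on $\mathbb R^K$. The quotient $p_t^{\mathrm{comb}}(v)$ is invariant under $v\mapsto cv$ for $c\ne0$, so the maximization is over directions, and the denominator $v^\top A_tv$ never vanishes for $v\ne0$.

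First, I will rewrite the numerator. Set $w:=A_t^{-1}p_t$, so that $p_t^\top v=w^\top A_t v=\langle w,v\rangle_A$. Cauchy–Schwarz in this inner product then gives
\[
(p_t^\top v)^2\le (w^\top A_t w)(v^\top A_t v)=(p_t^\top A_t^{-1}p_t)(v^\top A_tv).
\]
Dividing by $v^\top A_tv>0$ yields $|p_t^{\mathrm{comb}}(v)|^2\le p_t^\top A_t^{-1}p_t$ for every $v\ne0$.

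Second, I will establish attainment and characterize the maximizers. Equality in Cauchy–Schwarz holds if and only if $v$ and $w$ are linearly dependent. If $p_t\ne0$, then $w\ne0$ and the maximizers are exactly $v\propto A_t^{-1}p_t$. Substituting $v=w$ directly gives $(p_t^\top A_t^{-1}p_t)^2/(p_t^\top A_t^{-1}p_t)=p_t^\top A_t^{-1}p_t$. If $p_t=0$, both sides are zero and every $v$ is optimal, consistent with the formula. An equivalent route is the Cholesky substitution $u=A_t^{1/2}v$, which turns the quotient into $(\tilde p^\top u)^2/\lVert u\rVert^2$ with $\tilde p=A_t^{-1/2}p_t$, maximized by $u\propto\tilde p$.

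As a sanity check, I would note that the bound never exceeds $1$. Using the decomposition $A_t=p_tp_t^\top+U_t^\top U_t$ from the display above, the value can be read as the squared multiple correlation of $Q_t$ on the signal span, since $A_t^{-1}$ weights $p_t$ by the full Gram geometry, including the transverse Gram matrix $B$ of \cref{prop:4}. Alternatively, Cauchy–Schwarz in $\mathbb R^d$ gives $(p_t^\top v)^2=\langle S_tv,Q_t\rangle^2\le\lVert S_tv\rVert^2=v^\top A_tv$, which is where the transverse Gram matrix re-enters through the normalization denominator.

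There is no real obstacle here. The only points needing care are the degenerate case $p_t=0$ and making explicit that positive definiteness of $A_t$ is exactly what makes $\langle\cdot,\cdot\rangle_A$ an inner product and guarantees the existence of $A_t^{-1}$.
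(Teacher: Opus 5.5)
Your proof is correct and is essentially the paper's argument: Cauchy--Schwarz in the $A_t$-inner product with $w=A_t^{-1}p_t$ is the same inequality as the paper's Euclidean Cauchy--Schwarz after the substitution $y=A_t^{1/2}v$, and you also note that substitution as an alternative route. Your handling of the degenerate case $p_t=0$ and your check that the bound never exceeds $1$ are correct additions the paper leaves implicit (a small nit: $A_t^{1/2}$ is the symmetric square root, not the Cholesky factor, although either substitution works).
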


\Needspace*{4\baselineskip}
\begin{proof}
Set $y=A_t^{1/2}v$. Cauchy–Schwarz gives

\[
\frac{(p_t^\top v)^2}{v^\top A_tv}
=
\frac{\big((A_t^{-1/2}p_t)^\top y\big)^2}{y^\top y}
\le p_t^\top A_t^{-1}p_t,
\]

with equality when $y\propto A_t^{-1/2}p_t$.
\end{proof}

The best ex post Pearson IC available from the signal span is the Mahalanobis norm of $p_t$ in the signal-Gram metric. Transverse structure enters exactly through $A_t^{-1}$.

If the signal columns are linearly dependent, replace $A_t^{-1}$ by the Moore–Penrose pseudoinverse $A_t^+$. The optimum stays finite: $v\in\ker(A_t)$ implies $S_tv=0$ and hence $p_t^\top v=Q_t^\top S_tv=0$, so $p_t\in\operatorname{range}(A_t)$.

For an equal-weight pair,

\[
p_t^{\mathrm{comb}}
=
\frac{(p_{1,t}+p_{2,t})/2}
{\sqrt{\left(1+p_{1,t}p_{2,t}+\langle U_{1,t},U_{2,t}\rangle\right)/2}}.
\]

With the individual longitudinal coordinates held fixed, greater transverse similarity inflates the combination norm and shrinks the normalized IC; transverse cancellation can raise IC per unit Euclidean norm.

\Needspace*{5\baselineskip}
\begin{corollary}[span optimum is the multiple correlation]\label{cor:6.1}
Let

\[
P_{S_t}=S_t(S_t^\top S_t)^+S_t^\top
\]

be the orthogonal projector onto the contemporaneous signal span. Then

\[
\boxed{
p_t^\top A_t^+p_t
=Q_t^\top P_{S_t}Q_t
=\lVert P_{S_t}Q_t\rVert_2^2
=\mathcal R_t^2,}
\]

where $\mathcal R_t^2$ is the cross-sectional coefficient of determination from regressing the realized return direction on the signal span. So

\[
\sqrt{p_t^\top A_t^+p_t}
\]

is the multiple correlation coefficient between realized returns and that span: the natural multi-signal extension of single-signal IC.

\end{corollary}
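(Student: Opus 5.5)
The plan is to substitute $p_t=S_t^\top Q_t$ into the quadratic form and recognize the resulting matrix as the projector. With $A_t=S_t^\top S_t$, we get
\[
p_t^\top A_t^+p_t=Q_t^\top S_t(S_t^\top S_t)^+S_t^\top Q_t=Q_t^\top P_{S_t}Q_t,
\]
which is the first equality of \cref{cor:6.1}.

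The second equality needs $P_{S_t}$ to be the orthogonal projector onto $\operatorname{range}(S_t)$. I would verify the three defining properties directly from the Moore–Penrose identities $A^+AA^+=A^+$ and $AA^+A=A$, together with symmetry of $A^+$ for symmetric $A$.
\begin{itemize}
\item \emph{Symmetry} of $P_{S_t}$ is immediate from symmetry of $A_t^+$.
\item \emph{Idempotence}: $P_{S_t}^2=S_tA_t^+A_tA_t^+S_t^\top=P_{S_t}$.
\item \emph{It fixes the span}: $P_{S_t}S_t=S_tA_t^+A_t=S_t$. The last step holds because $A_t^+A_t$ is the projector onto $\operatorname{range}(A_t)=\operatorname{range}(S_t^\top)$, and $S_t$ annihilates $\ker(A_t)=\ker(S_t)$, as already noted after \cref{prop:6}.
\item \emph{Range}: the range of $P_{S_t}$ is contained in $\operatorname{range}(S_t)$ by its form.
\end{itemize}
Given symmetry and idempotence, $Q^\top PQ=Q^\top P^\top PQ=\lVert PQ\rVert_2^2$.

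For the identification with $\mathcal R_t^2$, I would interpret the regression of $Q_t$ on the columns of $S_t$ without intercept. Fitted values are $P_{S_t}Q_t$ and residuals are $(I-P_{S_t})Q_t$. The uncentered coefficient of determination is
\[
\frac{\lVert P_{S_t}Q_t\rVert^2}{\lVert Q_t\rVert^2}=\lVert P_{S_t}Q_t\rVert^2,
\]
since $\lVert Q_t\rVert=1$.

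The step needing the most care is justifying that this uncentered $R^2$ coincides with the usual centered one. All signal columns lie in $H$, and so does $Q_t$, so every cross-sectional mean is zero. Including an intercept therefore changes nothing: $\mathbf1$ is orthogonal to both the span and $Q_t$, and centering is the identity. Hence $\mathcal R_t^2$ is the standard coefficient of determination.

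Finally, the multiple correlation coefficient is the maximal Pearson correlation between $Q_t$ and any element $S_tv$ of the span. By \cref{lem:1} applied to the normalized combination, that correlation equals $p_t^{\mathrm{comb}}(v)$. By \cref{prop:6}, extended to the pseudoinverse case as in the remark following it, its maximal square is $p_t^\top A_t^+p_t$. This confirms that $\sqrt{p_t^\top A_t^+p_t}$ is the multiple correlation. As a cross-check, the maximizer $S_tv^\star\propto P_{S_t}Q_t$ is the fitted value of the regression.
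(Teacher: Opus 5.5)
Your proposal is correct and follows the route the paper intends. The paper gives no separate proof and treats the corollary as immediate from substituting $p_t=S_t^\top Q_t$ into the optimum of \cref{prop:6} (with the pseudoinverse remark after it), which is exactly your first step; your checks that $P_{S_t}$ is the symmetric idempotent projector onto $\operatorname{range}(S_t)$, and that centering is the identity on $H$ so the centered and uncentered $R^2$ coincide, fill in details the paper leaves implicit.
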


The same fact limits its interpretation. If a fixed rank-$K$ signal span is independent of an isotropic $Q_t$ in the $(d-1)$-dimensional neutral space, then

\[
\mathcal R_t^2
\sim
\operatorname{Beta}\!\left(\frac K2,\frac{d-1-K}{2}\right),
\qquad
\mathbb E[\mathcal R_t^2]=\frac{K}{d-1}.
\]

Raw ex post $R^2$ therefore rises mechanically with library rank even under a no-information null. A null-centered, or adjusted, statistic is

\[
\boxed{
\mathcal R_{\mathrm{adj},t}^2
=1-(1-\mathcal R_t^2)\frac{d-1}{d-1-K},}
\]

which has null expectation zero when the stated model applies. Neither raw nor adjusted ex post $R^2$ is a trading objective; both measure contemporaneous span capacity.

\Needspace*{5\baselineskip}
\begin{proposition}[optimal combination under a general risk metric]\label{prop:7}
Let $\Sigma_t$ be positive definite on the tradable neutral subspace and define

\[
B_t:=S_t^\top\Sigma_tS_t.
\]

The realized payoff per unit ex ante risk is

\[
\mathcal E_t(v)
:=
\frac{\Pi_t(v)}{\sqrt{v^\top B_tv}}
=
a_t\frac{p_t^\top v}{\sqrt{v^\top B_tv}}.
\]

If $B_t$ is positive definite, then

\[
\boxed{
\max_{v\ne0}|\mathcal E_t(v)|^2
=a_t^2p_t^\top B_t^{-1}p_t,
\qquad
v_{t,\Sigma}^\star\propto B_t^{-1}p_t.}
\]

\end{proposition}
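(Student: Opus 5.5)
The argument is the Cauchy–Schwarz argument of \cref{prop:6}, run in the metric $B_t$ instead of $A_t$. The key observation is that $\mathcal E_t(v)$ is $a_t$ times a generalized Rayleigh-type quotient, $(p_t^\top v)/\sqrt{v^\top B_tv}$. The scalar $a_t>0$ does not depend on $v$, so it factors out of the maximization and contributes the factor $a_t^2$. What remains is to maximize $(p_t^\top v)^2/(v^\top B_tv)$ over $v\ne0$.

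\emph{Why $v^\top B_tv$ is the ex ante variance.} Note first that $v^\top B_tv=(S_tv)^\top\Sigma_t(S_tv)$. This is the ex ante variance of the combined book $S_tv$, which lies in the neutral subspace $H$ because each column of $S_t$ is neutral. Positive definiteness of $\Sigma_t$ on $H$ makes this quantity nonnegative, and the hypothesis that $B_t$ is positive definite makes the quotient well defined for every $v\ne0$.

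\emph{The Cauchy–Schwarz step.} Since $B_t$ is positive definite, it has a symmetric positive definite square root $B_t^{1/2}$. Substitute $y=B_t^{1/2}v$, which is a bijection of $\mathbb R^K\setminus\{0\}$. Then $p_t^\top v=(B_t^{-1/2}p_t)^\top y$, and Cauchy–Schwarz gives
\[
\frac{(p_t^\top v)^2}{v^\top B_tv}=\frac{\big((B_t^{-1/2}p_t)^\top y\big)^2}{y^\top y}\le p_t^\top B_t^{-1}p_t .
\]
Equality holds exactly when $y\propto B_t^{-1/2}p_t$, that is, when $v\propto B_t^{-1}p_t$. So the bound is attained, the maximum equals $a_t^2p_t^\top B_t^{-1}p_t$, and the maximizer is $v^\star_{t,\Sigma}\propto B_t^{-1}p_t$.

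\emph{The degenerate case.} If $p_t=0$, both sides are zero and every $v$ is optimal. The stated proportionality then holds trivially.

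There is no genuine obstacle; the only point needing care is the invertibility bookkeeping. $\Sigma_t$ is assumed positive definite only on $H$, not on all of $\mathbb R^d$, so one must check that $B_t$ only ever evaluates $\Sigma_t$ on vectors in $H$. This holds because $\operatorname{range}(S_t)\subseteq H$. As a sanity check, taking $\Sigma_t$ to be the identity on $H$ gives $B_t=A_t$ and recovers \cref{prop:6} up to the factor $a_t^2$.
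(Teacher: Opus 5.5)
Your proposal is correct and is essentially the paper's proof: the paper says only that the argument is that of \cref{prop:6} with $A_t$ replaced by $B_t$, that is, the substitution $y=B_t^{1/2}v$ followed by Cauchy--Schwarz, which is exactly what you carry out. Your additional checks (that $a_t$ factors out, that $\operatorname{range}(S_t)\subseteq H$ so only the restriction of $\Sigma_t$ to $H$ matters, and the degenerate case $p_t=0$) are bookkeeping that the paper leaves implicit.
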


\begin{proof}
The proof is that of \cref{prop:6} with $A_t$ replaced by $B_t$.
\end{proof}

The same pseudoinverse extension covers singular $B_t$ when $\Sigma_t$ is positive definite on the tradable subspace.

Both optimizers are ex post envelopes, because $p_t$ contains the realized forward return. They are not trading rules. A feasible strategy must estimate the relevant longitudinal vector, or its conditional expectation, from information available before the return is realized, regularize the Gram inverse, and validate the weights out of sample. The ex post optimum remains useful as a span-efficiency diagnostic and theoretical upper bound.

Optimal weights of the form $C^{-1}E$, an inverse covariance applied to a vector of expected alphas, are standard for combining a large alpha library \parencite{kakushadze2017billion}. The objects differ from those here: that covariance is a time-series covariance of alpha returns, whereas $B_t$ is a single-date cross-sectional risk Gram matrix built from the signal columns. The present optimum is therefore a contemporaneous span-efficiency envelope rather than an allocation rule estimated over history.

The mixed-metric formulation of \cref{prop:7} keeps $p_t^\top v=\Pi_t(v)/a_t$ as the longitudinal exposure of the unnormalized combination while putting the economically relevant covariance metric in the denominator. It should not be called the IC of the combination. Alternatively, define

\[
b_t=\sqrt{\widetilde R_t^\top\Sigma_t^{-1}\widetilde R_t},
\qquad
Q_t^{\Sigma}
=\frac{\Sigma_t^{-1}\widetilde R_t}{b_t}.
\]

Then $Q_t^\Sigma$ is unit under $\langle x,y\rangle_\Sigma=x^\top\Sigma_ty$, and

\[
\gamma_{i,t}^\Sigma
:=
\frac{\langle S_{i,t},Q_t^\Sigma\rangle_\Sigma}
{\lVert S_{i,t}\rVert_\Sigma}
=\frac{S_{i,t}^\top\widetilde R_t}
{b_t\lVert S_{i,t}\rVert_\Sigma}.
\]

The whole longitudinal–transverse construction can be repeated in this risk geometry. The normalized coordinate $\gamma_{i,t}^\Sigma$ is a genuine risk-metric cosine, and the normalized transverse statistic is a $\Sigma_t$-partial correlation—not ordinary Pearson IC or partial correlation. Distribution theory for risk-adjusted cross-sectional ICs of this kind is developed in \textcite{ding2023timevarying}.

The paper therefore carries two linked lenses:

\begin{itemize}
\item \textbf{Pearson lens:} $A_t=S_t^\top S_t$, giving literal IC and ordinary partial correlation.
\item \textbf{Risk lens:} $B_t=S_t^\top\Sigma_tS_t$, giving payoff per unit economic risk.
\end{itemize}

The two rules agree for every longitudinal vector $p$ if and only if $B_t\propto A_t$, that is, if and only if the restriction of the quadratic form of $\Sigma_t$ to the signal span is a scalar multiple of the Euclidean form; this does not require $\Sigma_t$ to map the span into itself. We use \emph{isotropic on the signal span} as shorthand for this condition. For a particular realized $p_t$ the two optima can coincide under weaker conditions, for instance whenever $p_t$ is an eigenvector of $B_tA_t^{-1}$. A real implementation should report both; statistical similarity and economic risk are not the same metric.

\begin{figure}[!tbp]
\centering
\adjustbox{max width=\linewidth}{
\begin{tikzpicture}[x=1cm,y=1cm,
  >={Stealth[length=2.2mm,width=1.8mm]},
  font=\footnotesize,
  axis/.style={black!35,line width=0.4pt},
  level/.style={fignavy,line width=0.8pt},
  support/.style={black!55,line width=0.5pt,dash pattern=on 2.2pt off 1.6pt},
  vec/.style={line width=0.9pt,->}]
  \begin{scope}[shift={(-3.75,0)}]
    \node[anchor=south,font=\footnotesize\bfseries] at (0,2.95) {Pearson lens};
    \node[anchor=south,text=black!65] at (0,2.55) {$\{v:\ v^\top A_tv=1\}$,\quad $A_t=S_t^\top S_t=p_tp_t^\top+U_t^\top U_t$};
    \draw[axis] (-3.1,0) -- (3.1,0);  \draw[axis] (0,-2.55) -- (0,2.45);
    \node[anchor=west,text=black!55] at (3.05,-0.2) {$v_1$};
    \node[anchor=south west,text=black!55] at (0.05,2.2) {$v_2$};
    \fill[fignavy,opacity=0.05,rotate=45] (0,0) ellipse (1.633 and 2.828);
    \draw[level,rotate=45] (0,0) ellipse (1.633 and 2.828);
    \draw[support] ($(1.601,0.641)+(-1.2,1.6)$) -- ($(1.601,0.641)+(0.66,-0.88)$);
    \node[anchor=north west,text=black!55] at (2.2,-0.25) {$\perp p_t$};
    \draw[vec,figred,line width=1.15pt] (0,0) -- (0.96,0.72);
    \node[anchor=south east,text=figred] at (0.98,0.74) {$p_t$};
    \draw[vec,black] (0,0) -- (1.601,0.641);
    \fill (1.601,0.641) circle (1.2pt);
    \node[anchor=west] at (1.68,0.60) {$v^\star\propto A_t^{-1}p_t$};
    \node[anchor=north] at (0,-2.65) {$\max_v\lvert p_t^{\mathrm{comb}}(v)\rvert=\sqrt{p_t^\top A_t^{+}p_t}=\mathcal R_t$};
    \node[anchor=north,text=black!65,align=center] at (0,-3.15) {realized IC of the normalized combination;\\ $\mathcal R_t$ = multiple correlation of $Q_t$ on $\operatorname{span}(S_t)$};
  \end{scope}
  \draw[black!20,line width=0.4pt] (0,3.3) -- (0,-3.7);
  \begin{scope}[shift={(3.75,0)}]
    \node[anchor=south,font=\footnotesize\bfseries] at (0,2.95) {Risk lens};
    \node[anchor=south,text=black!65] at (0,2.55) {$\{v:\ v^\top B_tv=1\}$,\quad $B_t=S_t^\top\Sigma_tS_t$};
    \draw[axis] (-3.1,0) -- (3.1,0);  \draw[axis] (0,-2.55) -- (0,2.45);
    \node[anchor=west,text=black!55] at (3.05,-0.2) {$v_1$};
    \node[anchor=south east,text=black!55] at (-0.05,2.2) {$v_2$};
    \fill[fignavy,opacity=0.05,rotate=9.22] (0,0) ellipse (1.339 and 2.033);
    \draw[level,rotate=9.22] (0,0) ellipse (1.339 and 2.033);
    \draw[black!55,line width=0.5pt,dash pattern=on 0.8pt off 1.4pt] (0,0) -- (2.0,0.80);
    \node[anchor=west,text=black!55] at (2.02,0.80) {Pearson $v^\star$};
    \draw[support] ($(0.831,1.418)+(-0.72,0.96)$) -- ($(0.831,1.418)+(0.18,-0.24)$);
    \node[anchor=west,text=black!55] at (0.20,2.36) {$\perp p_t$};
    \draw[vec,figred,line width=1.15pt] (0,0) -- (0.96,0.72);
    \node[anchor=west,text=figred] at (1.06,0.66) {$p_t$};
    \draw[vec,black] (0,0) -- (0.831,1.418);
    \fill (0.831,1.418) circle (1.2pt);
    \node[anchor=west] at (0.95,1.64) {$v^\star_{\Sigma}\propto B_t^{-1}p_t$};
    \node[anchor=north] at (0,-2.65) {$\max_v\lvert\mathcal E_t(v)\rvert=a_t\sqrt{p_t^\top B_t^{-1}p_t}$};
    \node[anchor=north,text=black!65,align=center] at (0,-3.15) {realized payoff per unit ex ante risk;\\ not the IC of the combination};
  \end{scope}
  \node[anchor=north,align=center,text=black!75] at (0,-4.0)
    {The two rules agree for every $p_t$ iff $\Sigma_t$ is isotropic on the signal span; for one $p_t$ they can coincide by chance.\\
     Both are ex post envelopes ($p_t$ contains the realized return), not trading rules.};
\end{tikzpicture}}
\caption{\textbf{The same longitudinal vector, two metrics.} Weight space $v\in\mathbb R^K$ for $K=2$ signals. \emph{Left (Pearson lens):} the unit level set of the Euclidean signal Gram metric $A_t=S_t^\top S_t=p_tp_t^\top+U_t^\top U_t$. Maximizing the longitudinal exposure $p_t^\top v$ over this set gives $v_t^\star\propto A_t^{-1}p_t$, the point where a supporting line perpendicular to $p_t$ touches the ellipse; the attained value is the realized Pearson IC of the normalized combination, $\max_v|p_t^{\mathrm{comb}}(v)|=\sqrt{p_t^\top A_t^{+}p_t}=\mathcal R_t$, the multiple correlation of $Q_t$ on the signal span (\cref{prop:6}, \cref{cor:6.1}). \emph{Right (risk lens):} the unit level set of $B_t=S_t^\top\Sigma_tS_t$. The same construction gives $v_{t,\Sigma}^\star\propto B_t^{-1}p_t$ and $\max_v|\mathcal E_t(v)|=a_t\sqrt{p_t^\top B_t^{-1}p_t}$, the realized payoff per unit ex ante risk (\cref{prop:7}); this is not the IC of the combination. The dotted ray reproduces the Pearson optimum for comparison. The two rules coincide for every $p_t$ if and only if $\Sigma_t$ is isotropic on the signal span; for a particular $p_t$ they can coincide under weaker conditions. Both are ex post envelopes—$p_t$ contains the realized forward return—and measure contemporaneous span capacity, not deployable alpha.}
\label{fig:2}
\end{figure}

\Needspace*{8\baselineskip}
\section{Sharp bounds and non-identifiability}\label{sec:7}

\begin{proposition}[sharp pointwise bounds]\label{prop:8}
For neutral unit signals,

\[
\boxed{
\left|
\rho_{12,t}^{\mathrm{sig}}-p_{1,t}p_{2,t}
\right|
\le
\sqrt{(1-p_{1,t}^2)(1-p_{2,t}^2)}.}
\]

The endpoints correspond to transverse partial correlations $Z_t=\pm1$.

\end{proposition}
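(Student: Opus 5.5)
The plan is to reduce the bound to Cauchy–Schwarz in the transverse complement, and then prove sharpness by an explicit construction inside $H$.

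\textbf{The inequality.} By \cref{lem:1}, $\rho_{12,t}^{\mathrm{sig}}=\langle S_{1,t},S_{2,t}\rangle$. By \cref{lem:2}, the left-hand side is therefore exactly $|\langle U_{1,t},U_{2,t}\rangle|$. Cauchy–Schwarz in $e_1^\perp$ bounds this by $\lVert U_{1,t}\rVert_2\lVert U_{2,t}\rVert_2$. \Cref{lem:2} gives $\lVert U_{i,t}\rVert_2^2=1-p_{i,t}^2$, so this product is the stated square root. Since $G_t$ is an isometry, nothing depends on the frame. The degenerate case $|p_{i,t}|=1$ needs no separate treatment: there $U_{i,t}=0$, and both sides vanish.

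\textbf{The endpoints.} When $|p_{1,t}|,|p_{2,t}|<1$, divide through by the radius. The inequality becomes $|Z_t|\le1$, and by \cref{prop:3} this is $|\rho_{12\cdot R,t}|\le1$. Equality in Cauchy–Schwarz holds exactly when $U_{1,t}$ and $U_{2,t}$ are parallel, that is, when $Z_t=\pm1$.

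\textbf{Sharpness.} Fix any target values $p_1,p_2\in[-1,1]$. Pick a unit vector $u\in e_1^\perp$; this is possible because $m=d-2\ge1$ once $d\ge3$. Set
\[
S_i=G_t^{-1}\bigl(p_ie_1\pm\sqrt{1-p_i^2}\,u\bigr).
\]
These vectors lie in $H$, have unit norm, and have longitudinal coordinates $p_i$. Their transverse inner product is $\pm\sqrt{(1-p_1^2)(1-p_2^2)}$, so both endpoints are attained for every admissible pair $(p_1,p_2)$. By continuity of $Z_t$ along a rotation of $U_2$ within the plane of $e_1^\perp$ spanned by $u$ and a second direction, every intermediate value is also attained. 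That second direction needs $m\ge2$; when $m=1$ only the endpoints occur.

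\textbf{Main obstacle.} The one delicate point is the small-dimension edge case. When $d=2$, the transverse complement is $\{0\}$, so the bound collapses to $p_{1,t}=\pm1$ and $p_{2,t}=\pm1$ and holds trivially with both sides zero. The sharpness statement should be phrased for $d\ge3$, or, for $d=2$, read as trivial equality.
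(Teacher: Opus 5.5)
Your proposal is correct and is essentially the paper's argument, which the paper leaves implicit: by \cref{lem:2} the left side equals $|\langle U_{1,t},U_{2,t}\rangle|$, Cauchy--Schwarz with $\lVert U_{i,t}\rVert_2^2=1-p_{i,t}^2$ gives the bound, and dividing by the radius turns it into $|Z_t|\le1$, with equality exactly at $Z_t=\pm1$. Your attainment construction through $G_t^{-1}$ is the same device the paper uses in the proof of \cref{thm:9}, and your small-$d$ remarks match the paper's note that for $d=3$ only $Z_t=\pm1$ can occur.
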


At a realized IC of $0.03$,

\[
\lVert U_{i,t}\rVert_2\approx0.99955,
\]

so $99.91\%$ of squared signal norm is transverse to that date's realized return. That component is not economically inert in general; it is simply orthogonal to that fixed signal's same-date payoff direction.

\Needspace*{5\baselineskip}
\begin{theorem}[no universal control between signal and PnL correlation]\label{thm:9}
Assume $d\ge4$, so $m=d-2\ge2$. Without restrictions on transverse geometry, return magnitudes, or temporal dependence, neither average signal similarity nor PnL correlation determines, nontrivially bounds, or universally orders the other.

\end{theorem}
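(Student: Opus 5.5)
The plan is a decoupling construction. By \cref{lem:2} and \cref{sec:5}, $C_{\mathrm{sig}}=C_{\mathrm{IC}}+C_\perp$, while $\rho_{\mathrm{pnl}}$ depends only on the series $(a_t,p_{1,t},p_{2,t})$. The proof therefore reduces to the following claim. For any fixed admissible IC and dispersion series, the transverse average $C_\perp$ can be steered independently, subject only to the pointwise constraint of \cref{prop:8}. Conversely, for fixed transverse data, the IC and dispersion series can be chosen to make $\rho_{\mathrm{pnl}}$ essentially anything in $[-1,1]$.

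\textbf{Step 1: realizability.} At each date I would pick any $a_t>0$, any $p_{1,t},p_{2,t}\in(-1,1)$ and any $Z_t\in[-1,1]$, and show that unit neutral signals realize them. In the moving frame set
\[
P_{i,t}=p_{i,t}e_1+\sqrt{1-p_{i,t}^2}\,u_{i,t},
\]
where $u_{1,t},u_{2,t}$ are unit vectors in $e_1^\perp$ with $\langle u_{1,t},u_{2,t}\rangle=Z_t$. This needs $m\ge2$, i.e.\ $d\ge4$, to allow arbitrary $Z_t$ rather than only $\pm1$. Then take $S_{i,t}=G_t^{-1}P_{i,t}$ and $\widetilde R_t=a_tQ_t$. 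The pieces are now decoupled: $\rho_{\mathrm{pnl}}$ is a function of $(a_t,p_{i,t})$ alone, and $C_{\mathrm{sig}}=\overline{p_1p_2}+\overline{Z_t\sqrt{(1-p_1^2)(1-p_2^2)}}$.

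\textbf{Step 2: neither determines or bounds the other.} First fix $C_{\mathrm{sig}}$ at any interior value $c\in(-1,1)$ and choose small ICs, say $|p_{i,t}|\le\varepsilon$. Then $C_{\mathrm{IC}}=O(\varepsilon^2)$, and choosing $Z_t\equiv Z$ gives $C_{\mathrm{sig}}\approx Z$, so exact tuning of $Z$ hits $c$. Next, use two-point or $T$-point IC series to drive $\rho_{\mathrm{pnl}}$ to any target $r\in(-1,1)$. For example, take $a_t\equiv1$ and $p_{2,t}=\varepsilon(r\,\xi_t+\sqrt{1-r^2}\,\eta_t)$, with $p_{1,t}=\varepsilon\xi_t$ and $\xi,\eta$ centered, orthonormal sequences. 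Rescaling $\varepsilon$ leaves $\rho_{\mathrm{pnl}}$ unchanged. Holding $\rho_{\mathrm{pnl}}=r$ fixed, $C_{\mathrm{sig}}$ then ranges over all of $(-1,1)$ by varying $Z$. Symmetrically, holding $C_{\mathrm{sig}}=c$ fixed (readjusting $Z$ to absorb the $O(\varepsilon^2)$ change in $C_{\mathrm{IC}}$), $\rho_{\mathrm{pnl}}$ ranges over $(-1,1)$. So the joint range of the pair contains the open square $(-1,1)^2$. No function or nontrivial bound $f$ with $|\rho_{\mathrm{pnl}}|\le f(C_{\mathrm{sig}})$ (or the reverse) can exist. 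The same fact rules out any universal ordering, since both $C_{\mathrm{sig}}>\rho_{\mathrm{pnl}}$ and $C_{\mathrm{sig}}<\rho_{\mathrm{pnl}}$ occur.

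\textbf{Step 3: the roles of return magnitudes and temporal dependence.} I would add a remark that even with $a_t$ nonconstant, the $\operatorname{Cov}_t(a_t^2,x_t)$ term of \cref{sec:5} gives a second, independent lever. With non-iid time structure, the same pointwise constructions remain valid, so the hypotheses being dropped are exactly what the construction exploits.

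\textbf{Main obstacle.} The delicate part is the simultaneous hitting of prescribed pairs, because $C_{\mathrm{IC}}$ couples the two quantities. The $\varepsilon$-scaling argument handles this, provided $\rho_{\mathrm{pnl}}$ is scale invariant and $C_\perp$ can absorb the residual through a continuous choice of $Z$. The endpoint values $\pm1$ need a separate check or should be excluded: $\rho_{\mathrm{pnl}}=\pm1$ is attained with $\eta=0$, and $C_{\mathrm{sig}}=\pm1$ forces $S_1=\pm S_2$ and hence $\rho_{\mathrm{pnl}}=\pm1$. The theorem is therefore best phrased about the interior.
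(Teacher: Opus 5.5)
Your proof is correct and uses the same ingredients as the paper. You realize prescribed $(a_t,p_{1,t},p_{2,t},Z_t)$ by unit vectors in the moving frame and pull back by $G_t^{-1}$, using $m\ge2$. You then keep the ICs small so that $C_{\mathrm{IC}}$ is negligible and the transverse inner product absorbs any target.

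The difference is in how the argument is organized. The paper treats each direction with a separate construction:
\begin{itemize}
\item identical longitudinal series give $\rho_{\mathrm{pnl}}=1$, while $\langle V_t,W_t\rangle$ sweeps the signal similarity;
\item for the converse, it chooses $\langle V_t,W_t\rangle$ date by date so that the signal similarity equals $c$ \emph{at every date}, and attaches two longitudinal pairs with correlations $0$ and $0.9$, which is feasible once $|c|\le1-2\varepsilon^2$.
\end{itemize}
You instead use a single family. In it $\rho_{\mathrm{pnl}}=r$ independently of $\varepsilon$, and a constant $Z$ tunes $C_{\mathrm{sig}}$. This shows the joint range of $(C_{\mathrm{sig}},\rho_{\mathrm{pnl}})$ contains $(-1,1)^2$. ``No nontrivial bound'' and ``no ordering'' then follow immediately in both directions, whereas the paper's converse exhibits only two PnL correlations at a fixed $c$.

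The paper's per-date choice buys something your constant $Z$ gives up. Under it, the whole path of cross-sectional correlations, not just its average, fails to pin down PnL correlation. You could adopt that choice of $Z_t$ at no cost.

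Your endpoint remark is a correct refinement that the statement glosses over. $C_{\mathrm{sig}}=\pm1$ forces $S_{1,t}=\pm S_{2,t}$ for all $t$ and hence $\rho_{\mathrm{pnl}}=\pm1$. So the theorem holds on the interior, which the paper's $|c|\le1-2\varepsilon^2$ tacitly assumes.

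Two small bookkeeping points:
\begin{itemize}
\item With $p_{1,t}=\varepsilon\xi_t$ and $\xi,\eta$ orthonormal under the time average, the bound is $|p_{i,t}|\le\varepsilon\max_t(|\xi_t|+|\eta_t|)$ rather than $\varepsilon$. This is harmless, but it is the quantity you must make small, uniformly in $r$, to hit a fixed $c$.
\item Centered orthonormal $\xi,\eta$ require $T\ge3$: for $T=2$ any two nonconstant series have Pearson correlation $\pm1$. So ``two-point'' can only mean two-valued series, not $T=2$. The paper's example with correlations $0$ and $0.9$ needs the same $T\ge3$.
\end{itemize}
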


\Needspace*{4\baselineskip}
\begin{proof}
For the first direction, take $a_t\equiv1$ and any nonconstant $p_t\in(-1,1)$. Set

\[
P_{1,t}=p_te_1+\sqrt{1-p_t^2}V_t,
\qquad
P_{2,t}=p_te_1+\sqrt{1-p_t^2}W_t,
\]

where $V_t,W_t$ are transverse unit vectors. The two PnLs are identical, so $\rho_{\mathrm{pnl}}=1$, while

\[
\langle P_{1,t},P_{2,t}\rangle
=p_t^2+(1-p_t^2)\langle V_t,W_t\rangle.
\]

Because $m\ge2$, the final inner product can take any value in $[-1,1]$.

For the converse, choose two pairs of nonconstant longitudinal series, each bounded in magnitude by $\varepsilon$, with different time-series correlations—say zero and $0.9$. Fix a constant target signal similarity $c$. The transverse choice

\[
\langle V_t,W_t\rangle
=
\frac{c-p_{1,t}p_{2,t}}
{\sqrt{(1-p_{1,t}^2)(1-p_{2,t}^2)}}
\]

is feasible whenever

\[
|c-p_{1,t}p_{2,t}|
\le
\sqrt{(1-p_{1,t}^2)(1-p_{2,t}^2)}.
\]

A sufficient uniform condition is $|c|\le1-2\varepsilon^2$: the left side is at most $|c|+\varepsilon^2\le1-\varepsilon^2$, while the right side is at least $1-\varepsilon^2$. Both longitudinal pairs can therefore be completed by transverse vectors that produce exactly the same $c$ at every date but different PnL correlations.

Finally, these constructions are genuine signals, not merely formal coordinates: applying $G_t^{-1}$ to each $P_{i,t}$ along any chosen return path $\widetilde R_t=a_tQ_t$ yields neutral unit signal vectors in the original frame.
\end{proof}

When $d=3$, the transverse space is one-dimensional and only the two choices $Z_t=\pm1$ are available; a weaker discrete non-identifiability remains.

\Needspace*{8\baselineskip}
\section{Null distribution of transverse partial correlation}\label{sec:8}

A measured transverse partial correlation is only meaningful against a scale. In a $d$-asset cross-section, two signals that share no structure whatsoever will still exhibit a nonzero residual similarity on any given date, simply because two random directions in a finite-dimensional space are never exactly orthogonal. The isotropic null below is that scale. It is not proposed as a model of how signals are generated—signals built from a common feature pipeline are anything but isotropic—but as the exact reference against which persistent transverse structure can be quantified, in the same way that a $t$-distribution is a reference for a regression coefficient rather than a belief about the data.

\Needspace*{5\baselineskip}
\begin{proposition}[exact null law]\label{prop:10}
Conditional on $p_{1,t},p_{2,t}$, suppose the normalized transverse directions are independent and uniform on $S^{m-1}$, the unit sphere of the $m$-dimensional transverse space $e_1^\perp$ \parencite{mardia2000directional}. Then $Z_t$ has density

\[
f_m(z)
=
\frac{\Gamma(m/2)}{\sqrt\pi\,\Gamma((m-1)/2)}
(1-z^2)^{(m-3)/2},
\qquad -1<z<1,
\]

with

\[
\mathbb E[Z_t]=0,
\qquad
\operatorname{Var}(Z_t)=\frac1m.
\]

\end{proposition}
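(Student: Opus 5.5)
The plan is to reduce the statement to the distribution of a single coordinate of a uniform point on $S^{m-1}$. Write $V_t=U_{1,t}/\sqrt{1-p_{1,t}^2}$ and $W_t=U_{2,t}/\sqrt{1-p_{2,t}^2}$. By the definition of $Z_t$ in \cref{sec:3}, we have $Z_t=\langle V_t,W_t\rangle$. The normalizing radii are functions of $p_{1,t},p_{2,t}$ alone, so conditioning on them leaves $Z_t$ as the inner product of two independent uniform unit vectors in the $m$-dimensional space $e_1^\perp$.

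First we condition on $W_t=w$. The uniform law on $S^{m-1}$ is invariant under $O(m)$, which is exactly the residual gauge group of \cref{sec:4}. We may therefore rotate $w$ to a fixed unit vector $f_1$ without changing the law of $V_t$. The conditional law of $Z_t$ is then the law of the first coordinate $V_{t,1}$ of a uniform point on $S^{m-1}$. This law does not depend on $w$, so it is also the unconditional law, and $Z_t$ is independent of $W_t$.

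Next we compute the law of that coordinate. The cleanest route is the Gaussian representation $V=X/\lVert X\rVert$ with $X\sim N(0,I_m)$. Then
\[
Z^2=\frac{X_1^2}{X_1^2+\sum_{j\ge2}X_j^2}\sim\operatorname{Beta}\!\left(\tfrac12,\tfrac{m-1}{2}\right),
\]
because it is a ratio of independent $\chi^2_1$ and $\chi^2_{m-1}$ variables. The sign of $Z$ is symmetric and independent of $|Z|$. Changing variables from $u=z^2$ to $z$, with a factor $\tfrac12$ for the sign, gives a density proportional to $(1-z^2)^{(m-3)/2}$. The constant $B(\tfrac12,\tfrac{m-1}{2})^{-1}=\Gamma(m/2)/\bigl(\Gamma(1/2)\Gamma((m-1)/2)\bigr)$ with $\Gamma(1/2)=\sqrt\pi$ matches the stated normalization.

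Alternatively, one can slice the sphere at height $z$: the slice is a sphere $S^{m-2}$ of radius $\sqrt{1-z^2}$, and the arc-length factor $(1-z^2)^{-1/2}$ again yields exponent $(m-3)/2$.

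The moments follow without integrating the density. $\mathbb E[Z_t]=0$ by the symmetry $z\mapsto -z$. For the variance, note that $\sum_{j=1}^m V_j^2=1$ and that the coordinates are exchangeable, so $\mathbb E[V_1^2]=1/m$. This is also the Beta mean $\tfrac12\big/\tfrac m2$.

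The only real obstacle is bookkeeping. We need $m\ge2$ for the density to exist; for $m=2$ the exponent is $-1/2$, which is the arcsine law and still integrable. For $m=1$ the law degenerates to $\pm1$, consistent with the remark after \cref{thm:9}. We also need $|p_{i,t}|<1$ so that $V_t$ and $W_t$ are defined, as required in \cref{prop:3}. With those conditions recorded, we will note that this is the classical Fisher null law of a partial correlation with $d-2$ effective degrees of freedom.
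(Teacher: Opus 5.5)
Your proof is correct. The paper itself gives no derivation of this law: it treats it as the classical marginal of one coordinate of a uniform point on $S^{m-1}$, citing Mardia and Jupp, and appeals to Fisher and Anderson for the Gaussian partial-correlation null and the $t_{d-3}$ statistic. Your argument is the standard derivation behind that citation. You use independence and $O(m)$-invariance to reduce $\langle V_t,W_t\rangle$ to a single coordinate, and then the Gaussian representation to get $Z_t^2\sim\operatorname{Beta}(\tfrac12,\tfrac{m-1}{2})$. This makes the proposition self-contained, including the edge cases $m=2$ (arcsine) and $m=1$ (the $\pm1$ degeneracy noted after the non-identifiability theorem). It also gives the paper's next displayed claim at no extra cost, which the citation-based treatment does not. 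With $w$ rotated to $f_1$ you have $Z_t=X_1/\lVert X\rVert$, so $\tau_t=Z_t\sqrt{m-1}/\sqrt{1-Z_t^2}=X_1\big/\sqrt{\textstyle\sum_{j\ge2}X_j^2/(m-1)}$, which is a $t_{m-1}$ variable by definition. One labeling caution for your closing sentence: in the paper's bookkeeping $d-2=m$ is the transverse (effective sample) dimension, while the classical test has $d-3=m-1$ degrees of freedom. So say \enquote{effective dimension $d-2$} rather than \enquote{$d-2$ degrees of freedom.}
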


Independent jointly Gaussian cross-sectional observations, after demeaning and partialling out one control, induce the same independent uniform residual directions. The geometric isotropic null and the classical Gaussian partial-correlation null are therefore observationally equivalent for $Z_t$, although Gaussianity is a stronger assumption on the underlying data. Under either construction \parencite{fisher1924partial,anderson2003multivariate},

\[
\boxed{
\tau_t
=
\frac{Z_t\sqrt{m-1}}{\sqrt{1-Z_t^2}}
\sim t_{m-1}=t_{d-3}.}
\]

Fisher's transform \parencite{fisher1921probable} gives the large-$d$ approximation

\[
\operatorname{arctanh}(Z_t)
\approx
\mathcal N\!\left(0,\frac1{m-2}\right).
\]

The degrees of freedom double as a dimension check: demeaning uses one of the $d$ asset dimensions and partialling out $Q_t$ uses another, leaving $m=d-2$ transverse dimensions; the classical statistic has $d-3=m-1$ degrees of freedom, and the Fisher-$z$ variance uses $d-4=m-2$, one fewer, as for any partial correlation with one control.

The isotropic/Gaussian null is not a realistic generative model for signals that come out of a shared feature pipeline. Its value is as an exact finite-dimensional reference scale: systematic departures quantify persistent transverse structure; they do not prove that such structure “should not” exist.

Across dates, serial dependence and overlapping return horizons rule out an independent-date aggregation. Fisher-transformed partial correlations can instead be aggregated with a dependence-aware block bootstrap or a HAC procedure.

\Needspace*{8\baselineskip}
\section{Synthetic illustration}\label{sec:9}

The construction below shows a conditional claim at realistic weak-IC scales: \textbf{if transverse feature-family structure is designed independently of longitudinal IC dependence, signal similarity need not rank PnL dependence correctly.} The independence is imposed, not discovered; whether it holds approximately in a real signal library is an empirical question. This is a mechanism illustration, not calibrated evidence.

Use seed $20260906$, $d=500$, $T=1000$, and $K=12$. Under a zero-alpha isotropic benchmark, a single-date IC has standard deviation approximately $1/\sqrt{d-1}=0.0448$. We therefore use an IC innovation scale of $0.045$ rather than make the IC paths artificially smoother than their cross-sectional noise floor. Let mean ICs be equally spaced from $0.015$ to $0.025$ and generate

\[
p_{i,t}
=\alpha_i
+0.045\left(\ell_if_t+\sqrt{1-\ell_i^2}\,\varepsilon_{i,t}\right),
\]

where $f_t$ and $\varepsilon_{i,t}$ are independent standard normals and $\ell_i$ is equally spaced from $-0.7$ to $0.7$. Generate dispersion by

\[
a_t
=\frac{\exp(0.35|f_t|+0.25\eta_t)}
{\overline{\exp(0.35|f|+0.25\eta)}},
\]

with independent standard-normal $\eta_t$. Assign signals cyclically to three feature families. At each date draw a unit Gaussian direction for every family and an independent unit Gaussian direction for every signal in the $m=498$ dimensional transverse space. Mix family and idiosyncratic directions using loadings $\beta_i$ repeating $(0.15,0.35,0.60,0.80)$, renormalize the mixture, and scale it by $\sqrt{1-p_{i,t}^2}$.

Across the resulting $66$ signal pairs:

\par\vspace{\baselineskip}\noindent\begin{minipage}{\linewidth}\centering\small
\captionof{table}{Diagnostics across the 66 signal pairs of the synthetic design (seed 20260906, $d=500$, $T=1000$, $K=12$).}\label{tab:1}
\begin{tabular}{lr}
\toprule
Diagnostic & Simulated value \\
\midrule
Mean-IC range & $0.0146$ to $0.0277$ \\
Spearman rank correlation: $C_{\mathrm{sig}}$ vs. $\rho_{\mathrm{pnl}}$ & $-0.070$ \\
Spearman rank correlation: $C_{\mathrm{IC}}$ vs. $\rho_{\mathrm{pnl}}$ & $0.974$ \\
Mean $\mathcal R_t^2$ & $0.0334$ \\
Isotropic-null mean $K/(d-1)$ & $0.0240$ \\
Mean $\mathcal R_{\mathrm{adj},t}^2$ & $0.0096$ \\
Same-family median $|C_\perp|$ & $0.1641$ \\
Same-family 5\%–95\% range of $C_\perp$ & $0.0524$ to $0.4781$ \\
Cross-family median $|C_\perp|$ & $0.00094$ \\
Cross-family 5\%–95\% range of $C_\perp$ & $-0.00265$ to $0.00171$ \\
\bottomrule
\end{tabular}
\end{minipage}\par\vspace{\baselineskip}

Selected pairs expose the mechanism:

\par\vspace{\baselineskip}\noindent\begin{minipage}{\linewidth}\centering\small
\captionof{table}{Selected pairs from the synthetic design.}\label{tab:2}
\begin{tabular}{lrrrrr}
\toprule
Pair & $C_{\mathrm{sig}}$ & $C_{\mathrm{IC}}$ & $C_\perp$ & $\rho_{\mathrm{pnl}}$ & $\operatorname{Cov}(a^2,p_1p_2)$ \\
\midrule
3–12 & $0.4766$ & $-0.00016$ & $0.4767$ & $-0.405$ & $-0.000563$ \\
10–11 & $-0.0011$ & $0.00115$ & $-0.0022$ & $0.427$ & $0.000662$ \\
2–11 & $0.2091$ & $-0.00033$ & $0.2095$ & $-0.539$ & $-0.000949$ \\
2–3 & $-0.0014$ & $0.00085$ & $-0.0022$ & $0.415$ & $0.000439$ \\
11–12 & $-0.0002$ & $0.00168$ & $-0.0019$ & $0.616$ & $0.000981$ \\
\bottomrule
\end{tabular}
\end{minipage}\par\vspace{\baselineskip}

Under the imposed independence, then, IC co-alignment ranks PnL dependence almost perfectly, while full signal similarity carries almost none of that ranking information. Two signals can look nearly orthogonal while their PnLs are strongly correlated, or look strongly similar through a shared transverse feature family while their PnLs are negatively correlated. The sharply different same-family and cross-family distributions also show why the overall median is not an informative summary.

The qualitative direction of this result is not new: \textcite{sorensen2004multiple} reach the same conclusion by simulation, finding that the time-series correlation of information coefficients matters more for diversification than the cross-sectional correlation of the signals themselves. What the decomposition adds is an exact account of the difference, which allows the gap to be attributed to specific components rather than observed as an aggregate.

The dispersion-regime term is not negligible in this design: across pairs, its median absolute magnitude is roughly one-half of the pure scale term $\overline{a^2}C_{\mathrm{IC}}$. Its sign separates signals that agree particularly in high-dispersion states from those whose agreement is concentrated in quieter states.

As a sensitivity check, rerun the otherwise identical design at four IC innovation scales:

\par\vspace{\baselineskip}\noindent\begin{minipage}{\linewidth}\centering\small
\captionof{table}{Sensitivity of the synthetic design to the IC innovation scale.}\label{tab:3}
\begin{tabular}{rrrrr}
\toprule
IC innovation sd & Spearman $C_{\mathrm{sig}}$ vs. $\rho_{\mathrm{pnl}}$ & Spearman $C_{\mathrm{IC}}$ vs. $\rho_{\mathrm{pnl}}$ & Mean $\mathcal R^2$ & Mean $\mathcal R_{\mathrm{adj}}^2$ \\
\midrule
$0.012$ & $-0.165$ & $0.388$ & $0.0055$ & $-0.0190$ \\
$0.025$ & $-0.160$ & $0.877$ & $0.0128$ & $-0.0115$ \\
$0.045$ & $-0.070$ & $0.974$ & $0.0334$ & $0.0096$ \\
$0.070$ & $0.150$ & $0.989$ & $0.0729$ & $0.0501$ \\
\bottomrule
\end{tabular}
\end{minipage}\par\vspace{\baselineskip}

Signal correlation stays a poor ranking proxy through the natural $0.045$ noise scale and turns mildly positive only well beyond it. The multiple-correlation diagnostics also show why the original $0.012$ calibration was unsatisfactory: its signal span covered less of the realized-return direction than the isotropic rank-$12$ null, because the prescribed per-date ICs were smaller than random orientation would ordinarily produce—not because the span lost rank.

The experiment should eventually be replaced or supplemented by an out-of-sample study on a real signal library, with the inference and multiplicity controls below.

\Needspace*{8\baselineskip}
\section{Non-neutral signals and risk geometry}\label{sec:10}

Let $E_0=\mathbf1/\sqrt d$ and drop neutrality while keeping $\lVert S_{i,t}\rVert_2=1$. Decompose

\[
S_{i,t}=c_{i,t}E_0+p_{i,t}Q_t+U_{i,t},
\]

where $U_{i,t}\perp E_0,Q_t$. Then

\[
\boxed{
c_{i,t}^2+p_{i,t}^2+\lVert U_{i,t}\rVert_2^2=1,}
\]

and

\[
\boxed{
\Pi_{i,t}=\sqrt d\,\bar R_tc_{i,t}+a_tp_{i,t}.}
\]

Consequently,

\[
\begin{aligned}
\overline{\Pi_{1,t}\Pi_{2,t}}
={}&
\overline{a_t^2p_{1,t}p_{2,t}}
+d\,\overline{\bar R_t^2c_{1,t}c_{2,t}}\\
&+\sqrt d\,\overline{
a_t\bar R_t
(p_{1,t}c_{2,t}+p_{2,t}c_{1,t})}.
\end{aligned}
\]

The extra channels are the common-market component and two market–cross-sectional interactions. The neutral sharp bound likewise has $1-p_{i,t}^2$ replaced by $1-c_{i,t}^2-p_{i,t}^2$.

The partial-correlation reading survives as well. Partialling out $E_0$ is cross-sectional demeaning; partialling out $Q_t$ then removes the realized-return direction. Therefore

\[
\boxed{
Z_t^{\mathrm{nonneutral}}
=
\frac{
\langle S_{1,t},S_{2,t}\rangle-c_{1,t}c_{2,t}-p_{1,t}p_{2,t}
}{
\sqrt{(1-c_{1,t}^2-p_{1,t}^2)(1-c_{2,t}^2-p_{2,t}^2)}
}}
\]

is the cross-sectional partial correlation of the two raw signals controlling for the span of $\{\mathbf1,R_t\}$. The transverse dimension is still $d-2$, and the classical one-return-control partial-correlation test keeps $d-3$ degrees of freedom because demeaning already accounts for the intercept.

Transversality is always relative to realized returns at the stated date and horizon. A transverse component may align with later returns, or load on ex ante market, sector, style, and liquidity factors. Orthogonality to realized returns and orthogonality to a risk-model span are different conditions.

This paper does not derive turnover or transaction costs from the transverse decomposition. Turnover depends on $\lVert S_{i,t}-S_{i,t-1}\rVert$—a within-signal comparison across dates—whereas the present Gram geometry compares signals at one date. The target-aligned frame also rotates between $t-1$ and $t$, so longitudinal and transverse coordinates cannot be differenced without a rule that transports one frame to the next. Building such a temporal connection is a separate extension; cost-aware implementation enters here only through the distinction between research signals and implemented portfolios.

\Needspace*{8\baselineskip}
\section{Estimation and multiple comparisons}\label{sec:11}

A library of even modest size generates a large number of pairwise diagnostics: fifty signals yield 1,225 pairs, each with its own $C_{\mathrm{sig}}$, $C_{\mathrm{IC}}$, $C_\perp$, and $\rho_{\mathrm{pnl}}$. Ranking those pairs, declaring some \enquote{diversifying,} or flagging others as sharing transverse structure are all multiple-comparison problems, and the daily observations underlying each estimate are serially dependent through signal persistence and overlapping forward-return windows. The estimation recommendations here are therefore conservative by design, and are intended to be read alongside the literature on data snooping in alpha research cited in \cref{sec:1.1}.

For every signal pair, estimate

\[
C_{\mathrm{sig}},\quad
C_{\mathrm{IC}},\quad
C_\perp,\quad
\overline{a_t^2p_{1,t}p_{2,t}},\quad
\rho_{\mathrm{pnl}},\quad
\text{and}\quad
\overline{\operatorname{arctanh}(Z_t)}.
\]

Inference should keep the date-level joint contributions, because $\widehat C_\perp=\widehat C_{\mathrm{sig}}-\widehat C_{\mathrm{IC}}$ inherits their covariance. Moving-block or stationary bootstraps \parencite{kunsch1989jackknife,politis1994stationary,lahiri2003resampling} should reflect signal persistence and overlapping forward-return horizons; HAC inference \parencite{newey1987simple} is an alternative for smooth moment estimators.

A library of $K$ signals has $K(K-1)/2$ pairs. Pairwise discovery, rank comparisons, and isotropy rejections therefore need multiplicity control. Depending on the claim, use false-discovery-rate control \parencite{benjamini1995fdr}, family-wise bootstrap procedures \parencite{white2000reality,romano2005stepwise}, or simultaneous confidence intervals. A point-estimate ordering is not evidence of a stable ordering.

There is no universal sample-size threshold. Power depends on cross-sectional dimension, serial dependence, horizon overlap, return-dispersion variation, and the separation between competing pairwise statistics.

\Needspace*{8\baselineskip}
\section{Central result and empirical program}\label{sec:12}

The exact hierarchy is

\begingroup\small
\[
\boxed{
\begin{aligned}
\text{signal similarity}
&=\text{uncentered IC co-alignment}
+\text{transverse residual similarity},\\
\text{normalized transverse similarity}
&=\text{partial correlation controlling for realized returns},\\
\text{uncentered PnL second moment}
&=\text{dispersion-weighted uncentered IC cross-moment},\\
\text{PnL correlation}
&=\text{centered, normalized dependence of dispersion-weighted ICs}.
\end{aligned}}
\]
\endgroup

The separate ensemble results are

\[
\boxed{
\begin{aligned}
\text{squared multiple correlation}
&=p_t^\top(S_t^\top S_t)^+p_t
=\mathcal R_t^2,\\
\text{risk-optimal ex post weights}
&\propto(S_t^\top\Sigma_tS_t)^+p_t.
\end{aligned}}
\]

These are ex post span-fit statements. Raw $\mathcal R_t^2$ rewards increasing signal rank under the null; the risk expression is unsigned after squaring, contains realized information through $p_t$, and must keep the $a_t^2$ factor when read as squared payoff efficiency. Neither is a deployable library score without null adjustment, sign discipline, regularization, and out-of-sample estimation.

This gives a practical research program:

\begin{enumerate}
\item Decompose observed signal correlation into realized-IC co-alignment and partial correlation given realized returns.
\item Attribute disagreement between signal and PnL correlation to transverse geometry, dispersion weighting, implementation scaling, and Pearson centering.
\item Compare transverse partial correlations with their isotropic reference scale using dependence-aware inference.
\item Identify the feature families, construction pipelines, or risk exposures behind persistent transverse Gram structure.
\item Use $\mathcal R_{\mathrm{adj},t}^2$ only as a null-relative diagnostic of Euclidean span coverage. For economic selection, estimate a signed longitudinal target before realization, use a regularized risk Gram inverse, and evaluate the resulting realized payoff per unit risk strictly out of sample against signal-correlation baselines.
\end{enumerate}

The result does not claim that an elementary projection formula is new. It shows that familiar practitioner objects—signal correlation, realized IC, partial correlation, transfer efficiency, and PnL correlation—are coordinates and transformations of a single geometric decomposition. That decomposition says exactly what each diversity measure sees, what it discards, and where the discarded geometry re-enters once signals are combined into a constrained book.

\begin{practicebox}{What this means in practice}
\begin{itemize}[leftmargin=1.2em,itemsep=0.35em]
\item \textbf{Signal correlation and PnL correlation measure different things, and neither bounds the other.} Signal correlation adds two components: how the signals' realized ICs co-move, and how similar they are in the directions orthogonal to that date's returns. PnL correlation sees only the first, reweighted by dispersion and centered. A low signal correlation is not a diversification guarantee, and a high one is not a diversification failure (\cref{thm:9}).
\item \textbf{Decompose before you decide.} For each pair, report $C_{\mathrm{sig}}$, $C_{\mathrm{IC}}$, and $C_\perp=C_{\mathrm{sig}}-C_{\mathrm{IC}}$ together. Disagreement between signal and PnL correlation can then be attributed to transverse geometry, dispersion weighting, implementation scaling, or Pearson centering, rather than dismissed as noise.
\item \textbf{Transverse similarity is a partial correlation, and it has an exact reference scale.} The normalized transverse statistic $Z_t$ is the cross-sectional partial correlation of two signals controlling for realized returns, with a classical $t_{d-3}$ null. Persistent departures point to a shared feature family or pipeline, which is worth knowing even when the pair's PnLs are uncorrelated.
\item \textbf{Transverse structure is not inert once you combine signals.} For a single signal traded at fixed norm, the transverse component contributes nothing to that date's PnL. For a normalized or risk-budgeted combination, it enters through the denominator and determines how much IC or payoff-per-unit-risk the combination retains (\cref{sec:6}).
\item \textbf{Keep the two metrics separate.} Ordinary IC and partial correlation live in the Euclidean metric; payoff per unit risk lives in the risk-model metric. Report both. They agree for every signal configuration only when the risk model is isotropic on the signal span.
\item \textbf{Multiple correlation is a span diagnostic, not a library score.} Raw ex post $\mathcal R_t^2$ rises mechanically with the number of signals under a no-information null; use the adjusted version, and only as a null-relative measure of Euclidean span coverage. It contains realized returns and is not a trading objective.
\item \textbf{Treat the pairwise table as a multiple-testing problem.} With $K$ signals there are $K(K-1)/2$ pairs, each estimated from serially dependent data. Use block or stationary bootstraps or HAC standard errors, and control the false-discovery rate before acting on a ranking.
\end{itemize}
\end{practicebox}

\Needspace*{8\baselineskip}
\section{Technical remarks}\label{sec:13}

\begin{enumerate}
\item \textbf{Measurable selection.} No globally continuous choice $Q\mapsto G(Q)$ is needed. On a finite sample, choose $G_t$ date by date. More generally, a measurable selection suffices, and every reported quantity is independent of the residual frame choice.
\item \textbf{Horizon dependence.} Both $Q_t$ and the transverse decomposition depend on the forward-return horizon.
\item \textbf{Ex post status.} Because $G_t$ and $Q_t$ depend on realized forward returns, this is an ex post diagnostic, not a tradable transformation.
\item \textbf{Implementation.} The rotation itself is never computed; every quantity follows from inner products, return magnitude, and ordinary partial-correlation formulas.
\end{enumerate}

\phantomsection
\section*{Acknowledgments and AI-assistance disclosure}
\addcontentsline{toc}{section}{Acknowledgments and AI-assistance disclosure}

The author developed and revised this paper through iterative dialogue with ChatGPT, an AI system developed by OpenAI. Claude, an AI system developed by Anthropic, was used to generate repeated critical reviews of successive drafts, including checks of stated algebra and independent execution of the synthetic experiment. These systems are not authors or independent peer reviewers and cannot take responsibility for the work. Marc Nunes directed the inquiry, selected and evaluated the arguments, and accepts full responsibility for the paper's content, accuracy, and conclusions.

\clearpage
\appendix
\section{Reproduction code for the synthetic illustration}\label{app:code}

The following NumPy code fixes both the generator and draw order used in \cref{sec:9}.

\begin{lstlisting}[language=Python]
import numpy as np
from scipy.stats import spearmanr

rng = np.random.default_rng(20260906)
d, T, K = 500, 1000, 12
m = d - 2

innovation_sd = 0.045  # replace by 0.012, 0.025, 0.070 for sensitivity rows
alpha = np.linspace(0.015, 0.025, K)
f = rng.normal(size=T)
ell = np.linspace(-0.7, 0.7, K)
eps = rng.normal(size=(T, K))
p = alpha + innovation_sd * (
    f[:, None] * ell[None, :]
    + eps * np.sqrt(1 - ell**2)[None, :]
)
a = np.exp(0.35 * np.abs(f) + 0.25 * rng.normal(size=T))
a /= a.mean()

family = np.arange(K) % 3
beta = np.array([0.15, 0.35, 0.60, 0.80] * 3)[:K]
U = np.empty((T, K, m))

for t in range(T):
    centers = rng.normal(size=(3, m))
    centers /= np.linalg.norm(centers, axis=1)[:, None]
    noise = rng.normal(size=(K, m))
    noise /= np.linalg.norm(noise, axis=1)[:, None]
    directions = (
        beta[:, None] * centers[family]
        + np.sqrt(1 - beta**2)[:, None] * noise
    )
    directions /= np.linalg.norm(directions, axis=1)[:, None]
    U[t] = np.sqrt(1 - p[t] ** 2)[:, None] * directions

r_squared = np.empty(T)
for t in range(T):
    gram = np.outer(p[t], p[t]) + U[t] @ U[t].T
    r_squared[t] = p[t] @ np.linalg.pinv(gram) @ p[t]

adjusted_r_squared = 1 - (1 - r_squared) * (d - 1) / (d - 1 - K)

rows = []
for i in range(K):
    for j in range(i + 1, K):
        ic_product = p[:, i] * p[:, j]
        transverse = np.einsum("tm,tm->t", U[:, i], U[:, j])
        signal_similarity = (ic_product + transverse).mean()
        pnl_correlation = np.corrcoef(a * p[:, i], a * p[:, j])[0, 1]
        regime_term = np.cov(a * a, ic_product, ddof=0)[0, 1]
        rows.append(
            (i + 1, j + 1, signal_similarity, ic_product.mean(),
             transverse.mean(), pnl_correlation, regime_term)
        )

rows = np.asarray(rows)
print(spearmanr(rows[:, 2], rows[:, 5]).statistic)
print(spearmanr(rows[:, 3], rows[:, 5]).statistic)
print(r_squared.mean(), adjusted_r_squared.mean(), K / (d - 1))
\end{lstlisting}

\phantomsection
\printbibliography[heading=bibintoc]

\end{document}